\theoremstyle{definition}
\newtheorem{theorem}{Theorem}[section]
\newtheorem{lemma}{Lemma}[section]
\newtheorem{problem}{Problem}[section]
\newtheorem{remark}{Remark}[section]
\begin{document}
\title{{\Large\bf{A new class of differential quasivariational inequalities with an application to a quasistatic viscoelastic frictional contact problem}}\thanks{This work was supported by the National Natural Science Foundation of China (11671282, 11771067, 12171339, 12171070).}}
\author{Xu Chu$^a$, Tao Chen$^a$, Nan-jing Huang$^a$\footnote{Corresponding author. E-mail address: nanjinghuang@hotmail.com; njhuang@scu.edu.cn}
and Yi-bin Xiao$^b$
 \\{\small a. Department of Mathematics, Sichuan University, Chengdu, Sichuan, P.R. China}\\
 { \small b. School of Mathematical Sciences, University of Electronic Science and Technology of China, Chengdu, P.R. China}}
\date{ }
\maketitle
\begin{flushleft}
\hrulefill\\
\end{flushleft}
{\bf Abstract}: The overarching goal of this paper is to introduce and investigate a new nonlinear system driven by a nonlinear differential equation, a history-dependent quasivariational inequality, and a parabolic variational inequality in Banach spaces. Such a system can be used to model quasistatic frictional contact problems for viscoelastic materials with long memory, damage and wear. By using the Banach fixed point theorem, we prove an existence and uniqueness theorem of solution for such a system under some mild conditions. As a novel application, we obtain a unique solvability of a quasistatic viscoelastic frictional contact problem with long memory, damage and wear.
\\ \ \\
{\bf Keywords}: Differential quasivariational inequality; frictional contact problem; viscoelastic materials; long memory; damage and wear.
 \\ \ \\
\textbf{2020 AMS Subject Classification:} {49J40; 34G20; 74M10; 74M15}

\section{Introduction}

 Assume that $V, X, Y$ and $W$ are separable and reflexive Banach spaces, $V^*$ and $Y^*$ are the dual spaces of $V$ and $Y$, respectively. Let $Y\subset Y_1\subset Y^*$, where $Y_1$ is a separable Hilbert space. Suppose that $K_V$ and $K_Y$ are closed, convex and nonempty subsets of $V$ and $Y$, respectively. Let $I:=[0,T]$, where $T>0$ is a constant. In this paper, we are interested in studying a new class of differential quasivariational inequalities (DQVIs) with the following form: find $u:I\rightarrow K_V$, $\zeta:I\rightarrow K_Y$ and $w:I\rightarrow W$ such that, for all $t\in I$,
\begin{equation}\label{1.1}
\left\{\begin{array}{l}
\dot{w}(t)=F(t,w(t),\dot{u}(t)),\\
\left\langle A(t,u(t)) +\int_0^t B(t-s,u(s),\zeta(s))ds+C(t,\dot{u}(t)),v-\dot{u}(t) \right\rangle_{V^*\times V}\\
\quad \mbox{}+j(w(t) ,\dot{u}(t),v)-j(w(t) ,\dot{u}(t) ,\dot{u}(t))\geq \langle f(t) ,v-\dot{u}(t) \rangle_{V^*\times V},\quad\forall v\in K_V ,\\
\langle \dot{\zeta}(t) ,\eta-\zeta(t) \rangle_{Y_1}+a(\zeta(t) ,\eta-\zeta(t) )\geq\langle\phi(t,u(t) ,\zeta(t) ),\eta-\zeta(t) \rangle_{Y_1},\quad\forall\eta\in K_Y,\\
u(0)=u_0, \;w(0)=w_0, \; \zeta(0)=\zeta_0.\\
\end{array}\right.
\end{equation}

Clearly, if $C,w$ and $\zeta$ are omitted, $K_V=V$, $A(t,u(t))=A(u(t))$ and $j(w(t) ,\dot{u}(t),v)=j({u}(t),v)$, then \eqref{1.1} is reduced to the following problem: find $u:I\rightarrow V$ such that, for all $t\in I$,
\begin{equation*}
\left\{\begin{array}{l}
\left\langle A(u(t)) +\int_0^t B(t-s,u(s))ds,v-\dot{u}(t) \right\rangle_{V^*\times V}\\
\quad+j(u(t),v)-j(u ,\dot{u}(t))\geq \langle f(t) ,v-\dot{u}(t) \rangle_{V^*\times V},\quad\forall v\in V ,\\
u(0)=u_0,\\
\end{array}\right.
\end{equation*}
which is the time-dependent quasivariational inequality with the history-dependent operator considered by Kasri and  Touzaline \cite{A2018}.

The research on differential variational inequalities (DVIs) has a long history (see, for example, the excellent survey due to Brogliato and Tanwani \cite{Brog2020} and the references therein). It is well known that Pang and Stewart \cite{Pang2008} are the first systematically to consider DVIs in finite-dimensional Euclidean spaces. Since then, various theoretical results, approximating algorithms and real applications have been investigated extensively for classical DVIs and DQVIs under different conditions in the literature; for instance we refer the reader to \cite{Liu2018,Liu2021,Liu2021+,Weng2021,Weng2021+,Zeng2018,ChenX2014,Gwinner2013,LiXS2010,LiW2015,LiW2017,Wang2017,Zeng2021,Zeng2021+} and the references therein.

It is worth mentioning that contact mechanics has important applications in daily life and industry, such as coupling devices, bearings, ultrasonic welding and many others.  After nearly 40 years of research, contact mechanics has formed a relatively complete set of mathematical theory. It is well known that the variational inequalities and quasivariational inequalities as well as hemivariational inequalities are the most important mathematical tools to obtain the existence and uniqueness of the solutions for various problems arising in contact mechanics \cite{Capatina2014,Gasi2015,Han2001,AS2007,MOS2015,Sofonea2012,Kulig2018, T1999,Fr1996,Fr1995}. 
In order to describe the long memory property of materials such as rock and rubber, Sofonea and Matei \cite{Sofonea2011} introduced and studied a new class of history-dependent quasivariational inequalities. Moreover, by relaxing the contact condition in \cite{Sofonea2011}, Sofonea and Xiao \cite{Sofonea2016} considered a new class of quasivariational inequalities involving two history-dependent operators.  On the other hand, applying Archard's wear laws and Coulomb's friction laws, Andrews et al. \cite{KT1997} obtained the existence and uniqueness of the solution of a dynamic thermoviscoelastic contact problem. Furthermore, Sofonea et al. \cite{SofoneaF2016} proposed a new mathematical model to capture frictional contact problem with wear. Recently, Chen et al. \cite{Chen2020} introduced a hyperbolic quasi-variational inequality to characterize a dynamic viscoelastic contact problem with friction and wear. Very recently, in order to model an elastic frictional contact problem with long memory, damage and wear, Chen et al. \cite{Chen2021} considered a new class of differential nonlinear system driven by a differential equation, a history-dependent hemivariational inequality and a parabolic variational inequality in Banach spaces. Nevertheless, in the study of quasistatic viscoelastic frictional contact problems, it is necessary to consider the properties of viscoelastic materials with long memory, damage and wear (see Section 4 for more details). To the best of authors' knowledge, there are few works considering quasistatic frictional contact problems for viscoelastic materials with long memory, damage and wear. Thus, it would be important and interesting to investigate  DQVIs \eqref{1.1} which can be used to model quasistatic frictional contact problems for viscoelastic materials with long memory, damage and wear.

The overarching goal of this paper is to introduce and investigate a new nonlinear system driven by a nonlinear differential equation, a history-dependent quasivariational inequality, and a parabolic variational inequality in Banach spaces, which can be used to describe the quasistatic frictional contact problems for viscoelastic materials with long memory, damage and wear. The main contributions of this paper are twofold. One is to deliver some sufficient conditions for ensuring the existence and uniqueness of solution to DQVIs \eqref{1.1}. The other is to show the unique solvability result for a new quasistatic viscoelastic frictional contact problem with long memory, damage and wear by applying the obtained results for DQVIs \eqref{1.1}.

The rest of the paper is structured as follows. The next section recalls some known definitions and lemmas. After that in Section 3, we show the existence and uniqueness of solution for DQVIs \eqref{1.1} under some mild conditions by employing the Banach fixed point theorem. In Section 4, we provide a novel application of our abstract results to a quasistatic viscoelastic frictional contact problem with long memory, damage and wear.

\section{Preliminaries}

Let $(X,\|\cdot\|_X)$ be a real Banach space with its dual $X^*$ and $\langle\cdot,\cdot\rangle_{X^*\times X}$ denote the duality pairing between $X^*$ and $X$.
In this section, we recall some known definitions and lemmas which will be used to obtain our main results (see \cite{MOS2013,NP1994} for more details).

A functional $j(u):X\rightarrow\mathbb{R}$ is called lower semicontinuous if and only if for any convergence sequence $\{u_n\}_{n=1}^{\infty}\subset X$ satisfying $u_n\rightarrow u\in X$, one has $\lim\inf_{n\rightarrow\infty}{j(u_n)}\geq j(u)$. Let $j: X\rightarrow \mathbb{R}\cup \{+\infty\}$ be a lower semicontinuous convex functional with domain $D(j)=\{u\in X\:|\:j(u)<+\infty\}$ and $u \in D(j)$. Then there exists $u^*\in X^*$ such that $j(v)-j(u)\geq \langle u^*, v-u\rangle_{X^*\times X}$ holds for all $v\in X$.
The set of all such $u^*\in X^*$ is called the convex subdifferential of $j$ at $u$ and we denote it by $\partial j(u)$.

For a set-valued operator $A:X\rightarrow2^{X^*}$, the graph of $A$ is denoted by $G(A)$, i.e.,
$$G(A):=\{(u,u^*)\in X\times X^*\:|\:u^*\in A(u)\}.$$

A set-valued operator $A:X\rightarrow2^{X^*}$ is called monotone if
$$\langle u^*-v^*,u-v\rangle_{X^*\times X}\geq0,\;\forall(u,u^*),\;(v,v^*)\in G(A).$$
Moreover, a  monotone operator $A$ is called maximal monotone if for any $(u,u^*) \in X\times X^*$ satisfying
$$\langle u^*-v^*,u-v\rangle_{X^*\times X}\geq0, \quad \forall (v,v^*)\in G(A),$$
one has $(u,u^*)\in G(A)$.

A functional $j: X \rightarrow \mathbb{R} \cup\{\infty\}$ is called proper if $j(v)>-\infty$ for all $v\in X$ and there
exists a point $u\in X$ such that $j(u) < +\infty$. For a proper, convex and lower semicontinuous functional $j: X \rightarrow \mathbb{R} \cup\{\infty\}$, it is well known that $\partial j: X\to 2^{X^*}$ is maximal monotone.

At the end of this section, we recall two known results which will be used to obtain our main results.

\begin{lemma}\label{lemma_hisdep}\cite[Proposition 3.1]{Sofonea2012}
Let $\Lambda: C([0, T] ; X) \rightarrow C([0, T] ; X)$ be an operator satisfying the following property: there exists a constant $h>0$ such that
$$
\begin{array}{l}
\left\|\Lambda u_{1}(t)-\Lambda u_{2}(t)\right\|_{X} \leq
h\int_{0}^{t}

\left\|u_{1}(s)-u_{2}(s)\right\|_X ds, \quad \forall u_{1}, u_{2} \in C([0, T] ; X), \; t \in[0, T].
\end{array}
$$
Then there exists a unique element $u^{*} \in C([0, T] ; X)$ such that $\Lambda u^{*}=u^{*}$.
\end{lemma}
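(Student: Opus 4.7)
The plan is to exploit the fact that $C([0,T];X)$ equipped with the uniform norm $\|u\|_\infty := \sup_{t\in[0,T]} \|u(t)\|_X$ is a Banach space, and to prove the existence and uniqueness of a fixed point by showing that some iterate $\Lambda^n$ is a strict contraction on this space. The standard Banach fixed point theorem then applies in the following corollary form: if a self-map of a complete metric space has some iterate which is a contraction, then the map itself has a unique fixed point. I would either quote this corollary or prove it quickly by noting that the unique fixed point $u^\ast$ of $\Lambda^n$ satisfies $\Lambda^n(\Lambda u^\ast) = \Lambda(\Lambda^n u^\ast) = \Lambda u^\ast$, forcing $\Lambda u^\ast = u^\ast$; uniqueness of fixed points of $\Lambda$ is immediate since every fixed point of $\Lambda$ is a fixed point of $\Lambda^n$.

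The heart of the argument is an induction claim: for every $n \geq 1$, all $u_1, u_2 \in C([0,T];X)$ and all $t \in [0,T]$,
\begin{equation*}
\|\Lambda^n u_1(t) - \Lambda^n u_2(t)\|_X \;\leq\; \frac{h^n t^n}{n!}\, \|u_1 - u_2\|_\infty .
\end{equation*}
The case $n=1$ follows directly from the hypothesis by estimating the inside of the integral by $\|u_1 - u_2\|_\infty$. For the inductive step, I apply the hypothesis once more to $\Lambda^{n+1} u_i = \Lambda(\Lambda^n u_i)$, obtaining
\begin{equation*}
\|\Lambda^{n+1} u_1(t) - \Lambda^{n+1} u_2(t)\|_X \leq h \int_0^t \|\Lambda^n u_1(s) - \Lambda^n u_2(s)\|_X\, ds,
\end{equation*}
and then insert the inductive bound under the integral, yielding the factor $\int_0^t \frac{h^n s^n}{n!} ds = \frac{h^n t^{n+1}}{(n+1)!}$, which is exactly what is needed to close the induction.

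Taking the supremum over $t\in[0,T]$ in the inductive estimate gives
\begin{equation*}
\|\Lambda^n u_1 - \Lambda^n u_2\|_\infty \leq \frac{(hT)^n}{n!}\, \|u_1 - u_2\|_\infty .
\end{equation*}
Since $\frac{(hT)^n}{n!} \to 0$ as $n \to \infty$, there exists $n_0$ with $\frac{(hT)^{n_0}}{n_0!} < 1$, so $\Lambda^{n_0}$ is a strict contraction on the Banach space $C([0,T];X)$. Invoking the Banach fixed point theorem and the iterate corollary described above produces the unique fixed point $u^\ast \in C([0,T];X)$ of $\Lambda$.

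There is essentially no hard step; the only mildly delicate point is making the induction clean (in particular, keeping the $\|u_1 - u_2\|_\infty$ factor outside the integral so that the time-integration produces the factorial that defeats $h^n T^n$ for large $n$). A common alternative would be to equip $C([0,T];X)$ with a weighted Bielecki-type norm $\|u\|_\lambda := \sup_{t\in[0,T]} e^{-\lambda t}\|u(t)\|_X$ and make $\Lambda$ itself a contraction for large enough $\lambda$; I mention this as a viable substitute, but the iteration approach is shorter and matches the statement verbatim.
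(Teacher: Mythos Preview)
Your argument is correct and is exactly the standard proof of this result. Note, however, that the paper does not supply its own proof of this lemma: it is quoted verbatim from \cite[Proposition~3.1]{Sofonea2012} and used as a black box. The iteration estimate $\|\Lambda^n u_1-\Lambda^n u_2\|_\infty\le \frac{(hT)^n}{n!}\|u_1-u_2\|_\infty$ together with the ``contractive iterate'' corollary of the Banach fixed point theorem is precisely how the cited reference proceeds, so your approach matches the intended one. Your alternative via the Bielecki norm $\|u\|_\lambda=\sup_{t}e^{-\lambda t}\|u(t)\|_X$ is equally valid and, incidentally, is the device the present paper employs later in the proof of Lemma~\ref{lemma_aux_2}.
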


\begin{lemma}\label{l2.2}\cite[Lemma 3.3]{LiuSofonea2019}
Let $V$ be a reflexive Banach space and $K$ be a closed convex nonempty subset of $V$. Suppose that $A: K \to V^{*}$ and $\varphi:K\times K \rightarrow \mathbb{R}$ satisfy the following hypotheses:
\begin{itemize}
\item[H(A):] $A: K \rightarrow V^{*}$ is strongly monotone Lipschitz continuous, i.e.,
\begin{itemize}
\item[(a)] $\langle Au_1-Au_2,u_1-u_2\rangle_{V^*\times V}\ge m\|u_1-u_2\|^2_V$ for all $u_1,u_2 \in K$ with $m>0$;
\item[(b)] $\| Au_1-Au_2\|_{V^*} \le L\|u_1-u_2\|_V$ for all $u_1,u_2 \in K$ with $L>0$.
\end{itemize}
\item[H($\varphi$):] $\varphi: K\times K \rightarrow \mathbb{R} $ is such that
\begin{itemize}
\item[(a)] for any $u \in K, \varphi(u,\cdot)$ is convex and lower semicontinuous on $K$;
\item[(b)] for any $u_1,u_2,v_1,v_2\in K $, there exists $\beta>0$ such that
$$ \varphi(u_1,v_2)-\varphi(u_1,v_1)+\varphi(u_2,v_1)-\varphi(u_2,v_2)\leq\beta\|u_1-u_2\|_V\|v_1-v_2\|_V.$$
\end{itemize}
\end{itemize}
If $m>\beta$, then for each $f\in V^*$, there exists a unique element $u\in K$ such that
$$\langle Au,v-u\rangle_{V^*\times V}+\varphi(u,v)-\varphi(u,u)\geq\langle f,v-u\rangle_{V^*\times V}, \quad \forall v\in K.$$
\end{lemma}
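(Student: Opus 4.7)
The natural strategy is to reduce this quasivariational inequality, whose difficulty is that $\varphi$ depends on the unknown $u$ in its first slot, to a family of (genuine) elliptic variational inequalities by freezing that first slot, and then to recover the quasivariational solution via the Banach contraction principle. For each $\eta\in K$ I would consider the auxiliary problem: find $u=u_\eta\in K$ such that
\begin{equation*}
\langle Au,v-u\rangle_{V^*\times V}+\varphi(\eta,v)-\varphi(\eta,u)\ge\langle f,v-u\rangle_{V^*\times V},\qquad\forall v\in K.
\end{equation*}
Since $\varphi(\eta,\cdot)$ is proper, convex and lower semicontinuous by H($\varphi$)(a), while $A$ is strongly monotone and Lipschitz continuous by H($A$), this is a standard elliptic variational inequality of the second kind on the reflexive Banach space $V$. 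Its unique solvability follows from classical maximal monotone operator theory: the sum $A+\partial(\varphi(\eta,\cdot)+I_K)$ is maximal monotone and coercive, hence surjective, while strong monotonicity of $A$ forces uniqueness.

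Thus the solution map $\Lambda:K\to K$, $\eta\mapsto u_\eta$, is well defined, and the crux of the argument is to prove that $\Lambda$ is a contraction whenever $m>\beta$. Given $\eta_1,\eta_2\in K$ with $u_i:=\Lambda\eta_i$, I test the variational inequality for $u_1$ with $v=u_2$ and the one for $u_2$ with $v=u_1$; on addition the two $f$-contributions cancel and we obtain
\begin{equation*}
\langle Au_1-Au_2,u_1-u_2\rangle_{V^*\times V}\le\varphi(\eta_1,u_2)-\varphi(\eta_1,u_1)+\varphi(\eta_2,u_1)-\varphi(\eta_2,u_2).
\end{equation*}
Applying H($A$)(a) on the left and H($\varphi$)(b) on the right (with $\eta_1,\eta_2$ playing the role of $u_1,u_2$ and $u_1,u_2$ playing the role of $v_1,v_2$) this collapses to
\begin{equation*}
m\|u_1-u_2\|_V^2\le\beta\|\eta_1-\eta_2\|_V\|u_1-u_2\|_V,
\end{equation*}
so $\|\Lambda\eta_1-\Lambda\eta_2\|_V\le(\beta/m)\|\eta_1-\eta_2\|_V$ with contraction ratio strictly less than $1$. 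The Banach fixed point theorem on $K$ (closed, hence complete, in $V$) then yields a unique $u\in K$ with $\Lambda u=u$, which by construction solves the quasivariational inequality, and uniqueness of the fixed point transfers to uniqueness of the solution.

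The main obstacle I foresee is not the contraction estimate, which is essentially dictated by the form of H($A$) and H($\varphi$), but the existence half of the auxiliary lemma, since $\varphi(\eta,\cdot)$ is allowed to be merely proper, convex and lower semicontinuous (not necessarily finite or continuous throughout $K$, and with no smoothness). The cleanest route is to replace $\varphi(\eta,\cdot)+I_K$ by its Moreau--Yosida regularization, solve the resulting strongly monotone operator equation on $V$ by Browder's surjectivity theorem, and pass to the limit using strong monotonicity of $A$ and lower semicontinuity of $\varphi(\eta,\cdot)$; alternatively one appeals directly to Br\'ezis' theorem on sums of maximal monotone operators. Once that step is secured, the contraction-and-fixed-point argument above closes the proof.
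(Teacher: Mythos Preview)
The paper does not prove this lemma; it is quoted verbatim from \cite[Lemma 3.3]{LiuSofonea2019} and used as a black box. So there is no ``paper's own proof'' to compare against. That said, your argument is correct and is precisely the standard proof one finds in the cited literature: freeze the first slot of $\varphi$, invoke the classical existence theory for elliptic variational inequalities of the second kind to define the solution map $\Lambda$, derive the contraction estimate $\|\Lambda\eta_1-\Lambda\eta_2\|_V\le(\beta/m)\|\eta_1-\eta_2\|_V$ by cross-testing and adding, and conclude by Banach's fixed point theorem on the closed set $K$.
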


\section{Unique solvability for DQVIs \eqref{1.1}}
\setcounter{equation}{0}
In this section, we provide some sufficient conditions for ensuring the existence and uniqueness of solution for DQVIs \eqref{1.1}. To this end, we consider the Gelfand triplet of Banach spaces $\left(V, H, V^{*}\right)$ which have compact and dense embeddings. We also need the following assumptions.
\begin{itemize}
\item[H(A):]\label{assume:A_2}  The operator $A:I\times V\rightarrow V^*$ satisfies
\begin{itemize}
\item[(a)]$A(\cdot, v)$  is continuous on $I$ for any given $v\in V$.
\item[(b)]$A(t,\cdot)$ is Lipschitz continuous with  $ L_A>0$  on $V$ for any given $t\in I$, i.e.,
$$\left\|A(t, u_{1})-A(t, u_{2})\right\|_{V^*}\leq L_{A}\left\|u_{1}-u_{2}\right\|_{V}, \quad \forall (t,u_1,u_2)\in I\times V\times V.$$
\end{itemize}
\item[H(B):] The operator $B: I\times V\times Y\rightarrow V^*$ satisfies
\begin{itemize}\label{assume:B_2}
\item[(a)]$B(\cdot, v,\zeta)$  is continuous on $I$ for any given $v \in V$ and $\zeta \in Y$.
\item[(b)]$B(t,\cdot,\cdot)$ is Lipschitz continuous with $L_B>0$ on $V\times Y$ for any given $t\in I$, i.e.,
   $$\left\| B(t, u_{1},\zeta_1)-B(t, u_{2},\zeta_2)\right\|_{V^*} \leq L_{B}(\|u_{1}-u_{2}\|_{V}+\|\zeta_1-\zeta_2\|_Y),\quad\forall t\in I,\; \forall u_1,u_2 \in V,\;\forall \zeta_1,\zeta_2\in Y.$$
\item[(c)] There exists $\rho\in  L^2(I;\mathbb{R}^+)$ such that
$$\|B(t,u,\zeta)\|_{V^*}\leq \rho(t)(\|\zeta\|_Y+\|u\|_V), \quad \forall (t,u,\zeta)\in I \times V \times Y.$$
\end{itemize}
\item[H(C):]\label{assume:A_2}  The operator $C:I\times V\rightarrow V^*$ satisfies
\begin{itemize}
\item[(a)]$C(t,\cdot)$ is Lipschitz continuous with  $ L_{C1}>0$  on $V$ for any given $t\in I$, i.e.,
$$\left\|C(t, u_{1})-C(t, u_{2})\right\|_{V^*}\leq L_{C1}\left\|u_{1}-u_{2}\right\|_{V}, \quad \forall (t,u_1,u_2)\in I\times V\times V.$$
\item[(b)]$C(\cdot,u)$ is Lipschitz continuous with  $ L_{C2}>0$  on $I$ for any given $u\in V$, i.e.,
$$\left\|C(t_{1}, u)-C(t_{2}, u)\right\|_{V^*}\leq L_{C2}\left\|t_1-t_2\right\|_{V}, \quad \forall (t_1,t_2,u)\in I\times I\times V.$$
\item[(c)]$C(t,\cdot)$ is strong monotone with  $ L_C>0$  on  $V$ for any given $t\in I$, i.e.,
$$\left\langle C(t, u_{1})-C(t, u_{2}), u_{1}-u_{2}\right\rangle_{V^*\times V} \geq m_{C}\|u_{1}-u_{2}\|_{V}^2,\quad \forall (t,u_1,u_2)\in I\times V\times V.$$
\end{itemize}
\item[H(j):] The functional $j: W\times V\times V\rightarrow \mathbb{R}$ satisfies
\begin{itemize}\label{assume:j_2}
\item[(a)]$j(w, u, \cdot)$ is convex proper and lower semicontinuous on  $V$ for any given $(w,u)\in W\times V$.
\item[(b)] There exist $\alpha_0>0$ \ and $\alpha_1>0 $ such that
    \begin{align*}
    &\; j\left(w_{1}, u_{1}, v_{2}\right)-j\left(w_{1}, u_{1}, v_{1}\right)+j\left(w_{2},u_{2},v_{1}\right)-j\left(w_{2}, u_{2},v_{2}\right)\\
    \leq&\; \alpha_0\left\|w_{1}-w_{2}\right\|_{W}\left\|v_{1}-v_{2}\right\|_{V}+\alpha_1\left\|u_{1}-u_{2}\right\|_{V}
    \left\|v_{1}-v_{2}\right\|_{V},\quad \forall w_1, w_2\in W,\;\forall u_{1}, u_{2}, v_{1}, v_{2} \in V.
    \end{align*}
 \end{itemize}
\item[H(F):] The operator $F: I\times W\times V\rightarrow W$ satisfies
\begin{itemize}\label{assume:F_2}
\item[(a)]$F(\cdot, w,v)$ is continuous on $I$  for any given $(w,v)\in W\times V$.
\item[(b)] $F(t,\cdot,\cdot)$  is Lipschitz continuous with $L_F >0$ on $V\times Y$ for any given $t\in I$, i.e.,
        $$\left\| F(t,w_1, u_{1})-F(t,w_2, u_{2})\right\|_{W} \leq L_{F}(\|u_{1}-u_{2}\|_{V}+\|w_1-w_2\|_W), \quad \forall t\in I,\; \forall w_1,w_2\in W,\;\forall u_1,u_2 \in V.$$
\end{itemize}
 \item[H($\phi$):] The operator $\phi: I\times V\times Y\rightarrow Y_1$ satisfies
\begin{itemize}\label{assume:phi_2}
\item[(a)] $\phi(t,\cdot,\cdot)$  is Lipschitz continuous with $L_\phi>0$ on $V\times Y$ for any given $t\in I$, i.e.,
        $$\left\| \phi(t,u, \zeta)-\phi(t,v, \eta)\right\|_{Y_1} \leq L_{\phi}(\|u-v\|_{V}+\|\zeta-\eta\|_{Y_1}), \quad \forall t\in I,\; \forall u,v \in V,\; \forall \zeta,\eta\in Y.$$
\item[(b)]$\phi(\cdot,0_V,0_Y)\in L^2(I; Y_1).$
 \end{itemize}
\item[H(a):] The functional $a: Y\times Y\rightarrow \mathbb{R}$ satisfies
\begin{itemize}\label{assume:a_2}
\item[(a)] $a(\cdot,\cdot)$  is a continuous bilinear symmetric coercive functional and there exist $a_1\in\mathbb{R}$ and $a_2>0$ such that
        $$ a(\eta, \eta)+a_1\|\eta\|_{Y_1}^2\geq a_2\|\eta\|_Y^2,\quad \forall \eta\in Y.$$
 \end{itemize}
\end{itemize}

The main result of this section can be stated as follows.

\begin{theorem}\label{t3.1}
Suppose that assumptions H(A), H(B), H(C), H(j), H(F), H($\phi$), H(a) hold and $ m_C>\alpha_1$. Then DQVIs \eqref{1.1} has a unique solution
$(\zeta,u,w)\in  (H^1(I;Y_1)\cap L^2(I;Y))\times C^1(I;K_V)\times C^1(I;W)$.
\end{theorem}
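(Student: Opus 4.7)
The plan is to recast \eqref{1.1} as a single history-dependent fixed-point equation governed by Lemma \ref{lemma_hisdep}. I freeze auxiliary data $(\eta,\xi,\sigma)\in C(I;V)\times C(I;Y)\times C(I;W)$ and, for each $t\in I$, consider the frozen elliptic quasivariational inequality: find $z(t)\in K_V$ with
\begin{align*}
&\langle C(t,z(t)),v-z(t)\rangle_{V^*\times V}+j(\sigma(t),z(t),v)-j(\sigma(t),z(t),z(t))\\
&\qquad \geq \Bigl\langle f(t)-A(t,\eta(t))-\int_0^t B(t-s,\eta(s),\xi(s))\,ds,\,v-z(t)\Bigr\rangle_{V^*\times V},\quad \forall v\in K_V.
\end{align*}
Lemma \ref{l2.2} applies with $C(t,\cdot)$ (strongly monotone with constant $m_C$, Lipschitz with $L_{C1}$) and $\varphi(z,v):=j(\sigma(t),z,v)$, which satisfies H($\varphi$)(b) with $\beta=\alpha_1$ by H(j)(b) upon taking $w_1=w_2=\sigma(t)$. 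Since $m_C>\alpha_1$, a unique $z(t)$ exists; continuity of the data in $t$ yields $z\in C(I;V)$, and I set $u(t):=u_0+\int_0^t z(s)\,ds\in C^1(I;V)$.

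With this $z$, the Cauchy problem $\dot w=F(\cdot,w,z)$, $w(0)=w_0$, has a unique solution $w\in C^1(I;W)$ by H(F) and the classical Cauchy--Lipschitz theorem. With this $u$, the parabolic VI for $\zeta$ with coercive bilinear form $a$ (H(a)) and Lipschitz right-hand side $\phi(\cdot,u,\cdot)$ (H($\phi$)) admits a unique solution $\zeta\in H^1(I;Y_1)\cap L^2(I;Y)$ with $\zeta(t)\in K_Y$, by standard parabolic VI theory combined with an inner history-dependent fixed point that handles the $\zeta$-dependence of $\phi$. The composite map $\Lambda(\eta,\xi,\sigma):=(u,\zeta,w)$ is thus well defined on $C(I;V)\times C(I;Y)\times C(I;W)$, and its fixed points coincide with the solutions of \eqref{1.1}.

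The remaining task is to verify the history-dependent Lipschitz estimate
$$\|\Lambda(\eta_1,\xi_1,\sigma_1)(t)-\Lambda(\eta_2,\xi_2,\sigma_2)(t)\| \leq h\int_0^t \|(\eta_1,\xi_1,\sigma_1)(s)-(\eta_2,\xi_2,\sigma_2)(s)\|\,ds$$
in the product norm. Testing the two frozen QVIs against each other's solutions and using strong monotonicity of $C$ together with H(j)(b) gives
\begin{align*}
(m_C-\alpha_1)\|z_1(t)-z_2(t)\|_V &\leq L_A\|\eta_1(t)-\eta_2(t)\|_V+\alpha_0\|\sigma_1(t)-\sigma_2(t)\|_W\\
&\quad +L_B\int_0^t \bigl(\|\eta_1(s)-\eta_2(s)\|_V+\|\xi_1(s)-\xi_2(s)\|_Y\bigr)\,ds,
\end{align*}
which integrates to bound $\|u_1(t)-u_2(t)\|_V$ by a history integral of the input. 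Analogous Gronwall arguments on the ODE difference (via H(F)) and on the parabolic VI difference (testing with $\zeta_1-\zeta_2$, using H(a) to extract the $Y$-norm via the $a_1\|\cdot\|_{Y_1}^2$ correction, together with H($\phi$)) bound $\|w_1-w_2\|_W$ and $\|\zeta_1-\zeta_2\|_Y$ in the same form. Summing the three bounds and invoking Lemma \ref{lemma_hisdep} produces the unique fixed point, hence the unique solution of \eqref{1.1}.

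\textbf{Main obstacle.} The decisive step is closing the three coupled history-dependent estimates simultaneously. The assumption $m_C>\alpha_1$ is tight: without it the $\alpha_1\|z_1-z_2\|_V$ term from H(j)(b) cannot be absorbed by the strong monotonicity of $C$, and the QVI contribution to the contraction breaks down. A second subtlety is that the parabolic VI lives naturally in $Y_1$ whereas coerciveness is in the stronger space $Y$; the Gronwall argument must therefore be calibrated through H(a)(a) to extract the $Y$-norm bound on $\zeta_1-\zeta_2$ rather than merely a $Y_1$-bound, and this bound must then be fed back into the QVI and ODE estimates without degrading the history-integral form required by Lemma \ref{lemma_hisdep}.
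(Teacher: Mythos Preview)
Your strategy---one combined history-dependent fixed point on $(\eta,\xi,\sigma)\in C(I;V)\times C(I;Y)\times C(I;W)$---is a different route from the paper, which uses a cascade of nested fixed points (first $u$ given $w,\zeta$; then $(u,w)$ given $\zeta$; finally $\zeta$ in $L^2(I;Y_1)$). Your scheme is conceptually cleaner, but there is a genuine gap in the $\zeta$-component: the parabolic VI only delivers $\zeta\in H^1(I;Y_1)\cap L^2(I;Y)\hookrightarrow C(I;Y_1)$, \emph{not} $C(I;Y)$, so $\Lambda$ does not map your product space into itself and Lemma~\ref{lemma_hisdep} cannot be invoked as stated. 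Your ``main obstacle'' paragraph correctly flags the $Y$-vs-$Y_1$ tension, but the remedy you propose---use H(a) to extract a \emph{pointwise} $Y$-bound on $\zeta_1-\zeta_2$---fails: testing the difference of the two parabolic VIs against each other and integrating yields only
\[
\|\zeta_1(t)-\zeta_2(t)\|_{Y_1}^2 + 2a_2\!\int_0^t\!\|\zeta_1-\zeta_2\|_Y^2\,ds \;\le\; c\!\int_0^t\!\|\lambda_1-\lambda_2\|_{Y_1}^2\,ds,
\]
i.e.\ a pointwise $Y_1$-bound together with an $L^2(0,t;Y)$-bound, never a $C(I;Y)$-bound. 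Switching the second coordinate to $C(I;Y_1)$ is not a free move either, since H(B)(b) gives Lipschitz control of $B$ only in the $Y$-norm of its third argument, so your QVI difference estimate then no longer closes.

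The paper evades exactly this mismatch by decoupling: the $\zeta$-contraction is run separately in $L^2(I;Y_1)$ (Lemma~\ref{lemma_last}, via the $Y_1$-pointwise estimate of Lemma~\ref{lemma_zeta}), after the $(u,w)$ layer has been resolved in $C$-spaces where Lemma~\ref{lemma_hisdep} applies directly; the $B$-term is then controlled through $\|\zeta_1-\zeta_2\|_{L^2}$ rather than any pointwise $Y$-norm. To salvage your single-map route you would need either a version of Lemma~\ref{lemma_hisdep} valid on a mixed $C\times L^2$ product, or to peel off the $\zeta$-fixed point as the paper does.
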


We divide the proof of Theorem \ref{t3.1} into a series of lemmas. To this end, we first consider the following auxiliary problem.
\begin{problem}\label{problem_aux_1}
For any given $w\in C(I;W)$, find $\dot{u}_w:I\rightarrow K_V$ such that, for all $t\in I$ and
\begin{align}\label{eq_pro1}
&\quad\langle C(t,\dot{u}_w(t)),v-\dot{u}_w(t)\rangle_{V^*\times V} +j(w(t),\dot{u}_w(t),v)-j(w(t),\dot{u}_w(t),u\dot{}_w(t))\nonumber\\
&\geq \langle f(t),v-\dot{u}_w(t)\rangle_{V^*\times V}, \quad
\forall v\in K_V.
\end{align}
\end{problem}

\begin{lemma}\label{Lemma_1}
Assume that H(C) and H(j) hold and $ m_C>\alpha_1$. Then for any given $w\in C(I;W)$ and $f\in C(I;V^*)$, Problem \ref{problem_aux_1} has a unique solution $\dot{u}_w\in C(I,K_V)$.
\end{lemma}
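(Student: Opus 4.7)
The plan is to reduce the problem, for each fixed $t\in I$, to a single elliptic quasivariational inequality to which Lemma \ref{l2.2} applies, and then to establish continuity in $t$ by the standard trick of testing the inequality at $t_1$ against the solution at $t_2$ and vice versa.

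First, I would fix $t\in I$ and set $\widetilde{A}(v):=C(t,v)$ and $\varphi(u,v):=j(w(t),u,v)$ on $K_V$. Hypothesis H(C)(a)(c) gives that $\widetilde{A}$ is Lipschitz continuous with constant $L_{C1}$ and strongly monotone with constant $m_C$, so H($A$) of Lemma \ref{l2.2} holds. Hypothesis H(j)(a) yields convexity and lower semicontinuity of $\varphi(u,\cdot)$, while the inequality in H(j)(b), applied with the same $w_1=w_2=w(t)$ so that the $\alpha_0$--term vanishes, gives
\[
\varphi(u_1,v_2)-\varphi(u_1,v_1)+\varphi(u_2,v_1)-\varphi(u_2,v_2)\le \alpha_1\|u_1-u_2\|_V\|v_1-v_2\|_V,
\]
which is H($\varphi$)(b) with $\beta=\alpha_1$. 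Since $m_C>\alpha_1$, Lemma \ref{l2.2} supplies, for each $t\in I$, a unique element $\dot u_w(t)\in K_V$ solving \eqref{eq_pro1}, thus defining a map $t\mapsto\dot u_w(t)$ from $I$ into $K_V$.

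To prove continuity of this map, I would take $t_1,t_2\in I$, write $u_i:=\dot u_w(t_i)$, $w_i:=w(t_i)$, $f_i:=f(t_i)$, test the inequality at $t_1$ with $v=u_2$ and the one at $t_2$ with $v=u_1$, and add. After rearrangement the two $j$--terms can be bounded by H(j)(b), producing
\[
\langle C(t_1,u_1)-C(t_2,u_2),u_1-u_2\rangle_{V^*\times V}\le \alpha_0\|w_1-w_2\|_W\|u_1-u_2\|_V+\alpha_1\|u_1-u_2\|_V^2+\|f_1-f_2\|_{V^*}\|u_1-u_2\|_V.
\]
Splitting the left-hand side as $\langle C(t_1,u_1)-C(t_1,u_2),u_1-u_2\rangle+\langle C(t_1,u_2)-C(t_2,u_2),u_1-u_2\rangle$ and applying H(C)(c) and H(C)(b) yields the lower bound $m_C\|u_1-u_2\|_V^2-L_{C2}|t_1-t_2|\|u_1-u_2\|_V$. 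Combining and dividing by $\|u_1-u_2\|_V$ (the case $u_1=u_2$ being trivial) gives
\[
(m_C-\alpha_1)\|u_1-u_2\|_V\le L_{C2}|t_1-t_2|+\alpha_0\|w_1-w_2\|_W+\|f_1-f_2\|_{V^*},
\]
and since $m_C-\alpha_1>0$ and $w,f$ are continuous in $t$, letting $t_1\to t_2$ proves $\dot u_w\in C(I;K_V)$.

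The only real obstacle is the mild mismatch between the $V\times V$ variables in H($\varphi$)(b) of Lemma \ref{l2.2} and the $W\times V\times V$ variables of H(j)(b); this is handled by freezing $w=w(t)$ when invoking Lemma \ref{l2.2} pointwise, and conversely by retaining the $\alpha_0\|w_1-w_2\|_W$ term (which vanishes in the limit by continuity of $w$) when verifying continuity of the selection. Everything else is a routine application of the monotonicity/Lipschitz structure of $C$ and the combined continuity of $C(\cdot,u)$, $w$ and $f$.
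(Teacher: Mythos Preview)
Your proposal is correct and follows essentially the same approach as the paper: apply Lemma~\ref{l2.2} pointwise in $t$ with $\widetilde A=C(t,\cdot)$ and $\varphi(u,v)=j(w(t),u,v)$ (so $\beta=\alpha_1$), then obtain continuity by testing at $t_1$ with the solution at $t_2$ and vice versa, splitting the $C$-term and invoking H(C)(b)(c), H(j)(b). Your write-up is in fact slightly more explicit than the paper's on why the $\alpha_0$-term drops out in the pointwise application of Lemma~\ref{l2.2}.
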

\begin{proof}
For any fixed $t$, define an operator $A:V\rightarrow V^*$ and a function $\varphi:V\times V\rightarrow\mathbb{R} $ by setting
$$ Au=C(t,\dot{u}(t)),\quad \varphi(u,v)=j(w(t),\dot{u}(t),v),\quad\forall u,v\in K, t\in I.$$
Clearly, $A$ and $\varphi$ satisfy all the assumptions of Lemma \ref{l2.2}. Thus,  it follows from Lemma \ref{l2.2} that Problem \ref{problem_aux_1} has a unique solution $\dot{u}_w(t)\in K_V$. Next we prove that $\dot{u}_w\in C(I,K_V)$.  For $t_1,t_2\in I$, denote $\dot{u}_{w}(t_i)=\dot{u}_i,\;f(t_i)=f_i,\;w(t_i)=w_i$ with $i=1,2$. Thus, inequality \eqref{eq_pro1} yields
\begin{eqnarray}\label{test1}
\langle C(t_1,\dot{u}_1),\dot{u}_2-\dot{u}_1\rangle_{V^*\times V} + j(w_1,\dot{u}_1(t),\dot{u}_2)-j(w_1,\dot{u}_1,\dot{u}_1 )\geq\langle f_1,\dot{u}_2-\dot{u}_1\rangle
\end{eqnarray}
and
 \begin{eqnarray}\label{test2}
\langle C(t_2,\dot{u}_2),\dot{u}_1-\dot{u}_2\rangle_{V^*\times V} + j(w_2,\dot{u}_2(t),\dot{u}_1)-j(w_2,\dot{u}_2,\dot{u}_2 )\geq\langle f_2,\dot{u}_1-\dot{u}_2\rangle.
\end{eqnarray}
Adding \eqref{test1} to \eqref{test1}, one has
\begin{eqnarray*}
&&\langle C(t_1,\dot{u}_1)-C(t_2,\dot{u}_2),\dot{u}_1-\dot{u}_2\rangle_{V^*\times V} \nonumber\\
&\leq& j(w_1,\dot{u}_1(t),\dot{u}_2)-j(w_1,\dot{u}_1,\dot{u}_1 )+ j(w_2,\dot{u}_2(t),\dot{u}_1)\nonumber\\
&&\mbox{}-j(w_2,\dot{u}_2,\dot{u}_2 )+\langle f_1-f_2,u_1-u_2\rangle
\end{eqnarray*}
and so
\begin{eqnarray}\label{test_sum}
&&\langle C(t_1,\dot{u}_1)-C(t_1,\dot{u}_2),\dot{u}_1-\dot{u}_2\rangle_{V^*\times V} \nonumber\\
&\leq& j(w_1,\dot{u}_1(t),\dot{u}_2)-j(w_1,\dot{u}_1,\dot{u}_1 )+ j(w_2,\dot{u}_2(t),\dot{u}_1)\nonumber\\
&&\mbox{}-j(w_2,\dot{u}_2,\dot{u}_2 )+\langle f_1-f_2,\dot{u}_1-\dot{u}_2\rangle+\langle C(t_2,\dot{u}_2)-C(t_1,\dot{u}_2),\dot{u}_1-\dot{u}_2\rangle_{V^*\times V}.
\end{eqnarray}
By conditions H(j)(b) and H(C)(b)(c), inequality \eqref{test_sum} becomes
\begin{eqnarray}\label{test_sum2}
(m_C-\alpha_1)\|\dot{u}_1-\dot{u}_2\|
\leq \alpha_0\|w_1-w_2\|_W+\|f_1-f_2\|_{V^*} +L_{C2}|t_1-t_2|.
\end{eqnarray}
Since $w\in C(I;W)$ and $f\in C(I;V^*)$, it follows from \eqref{test_sum2} that
$$\lim_{t_1\rightarrow t_2}\|\dot{u}_w(t_1)-\dot{u}_w(t_2)\|_V=0,$$
i.e., $\dot{u}_w\in C(I,K_V)$. This finishes the proof.
\end{proof}
\begin{lemma}\label{Corollary_1}
Suppose that H(A), H(B), H(C), H(j) hold and $ m_C>\alpha_1$. Then for any given $\zeta\in H^1(I;Y_1)\cap L^2(I;Y)$, $w\in C(I;W)$, and $f\in C(I;V^*)$, the following problem
\begin{equation}\label{eq_col}
\left\{\begin{array}{l}
\text { find } u_{w\zeta}:(0, T) \rightarrow K_V \text { such that } \\
\left\langle A(t,u_{w\zeta}(t))+\int_0^tB(t-s,u_{w\zeta}(s),\zeta(s))ds+C(t,\dot{u}_{w\zeta}(t)),v-\dot{u}_{w\zeta}(t)\right\rangle_{V^*\times V}\\ \quad \mbox{}+j(w,\dot{u}_{w\zeta}(t),v)-j(w,\dot{u}_{w\zeta}(t),\dot{u}_{w\zeta}(t))
\geq \langle f(t),v-\dot{u}_{w\zeta}(t)\rangle_{V^*\times V}, \quad \forall t \in I,\; \forall v \in K_V, \\
\text { where }  u_{w\zeta}(0)=u_0
\end{array}\right.
\end{equation}
has a unique solution $u_{w\zeta}\in C^1(I;K_V)$.
\end{lemma}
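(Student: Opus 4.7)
My plan is to recast the variational inequality \eqref{eq_col} as a fixed point equation on $C(I;V)$ and invoke Lemma \ref{lemma_hisdep}. For each $\tilde u \in C(I;V)$, absorb the operator-valued memory terms into the right-hand side by defining
$$\tilde f_{\tilde u}(t) := f(t) - A(t,\tilde u(t)) - \int_0^t B(t-s, \tilde u(s), \zeta(s))\, ds.$$
Using the Lipschitz continuity H(A)(b) together with the continuity H(A)(a), and the Lipschitz continuity H(B)(b) with the continuity H(B)(a), a short check shows that $\tilde f_{\tilde u}\in C(I;V^{*})$ whenever $\tilde u\in C(I;V)$. Lemma \ref{Lemma_1} applied with $\tilde f_{\tilde u}$ in place of $f$ then yields a unique $\dot z_{\tilde u}\in C(I;K_V)$ solving the corresponding inequality of Problem \ref{problem_aux_1}. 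I would then define
$$\Lambda : C(I;V)\to C(I;V),\qquad (\Lambda \tilde u)(t):=u_{0}+\int_{0}^{t}\dot z_{\tilde u}(s)\, ds,$$
and observe that any fixed point $u^{*}$ of $\Lambda$ automatically satisfies $\dot u^{*}=\dot z_{u^{*}}\in C(I;K_V)$ and, unwinding the definition of $\tilde f_{u^{*}}$, solves \eqref{eq_col} with $u_{w\zeta}(0)=u_0$; hence $u_{w\zeta}:=u^{*}\in C^{1}(I;K_V)$.

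The core step is the history-dependent contraction estimate required by Lemma \ref{lemma_hisdep}. Given $\tilde u_{1},\tilde u_{2}\in C(I;V)$, write $\dot z_i:=\dot z_{\tilde u_i}$ and test each inequality with the other's solution, exactly as in the derivation of \eqref{test_sum2} in the proof of Lemma \ref{Lemma_1}. Adding the two resulting inequalities and applying the strong monotonicity H(C)(c), together with H(j)(b) (where the $\alpha_0\|w_1-w_2\|_W$ contribution vanishes because the $w$-argument is common), produces the bound $(m_C-\alpha_1)\|\dot z_1(t)-\dot z_2(t)\|_V\leq \|\tilde f_{\tilde u_1}(t)-\tilde f_{\tilde u_2}(t)\|_{V^{*}}$. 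Inserting H(A)(b) and H(B)(b) into the right-hand side yields the pointwise estimate
$$(m_C-\alpha_1)\|\dot z_1(t)-\dot z_2(t)\|_V\leq L_A\|\tilde u_1(t)-\tilde u_2(t)\|_V+L_B\int_{0}^{t}\|\tilde u_1(s)-\tilde u_2(s)\|_V\, ds.$$
Integrating over $[0,t]$ and absorbing the resulting double integral via $\int_{0}^{t}\!\int_{0}^{s}\varphi(\tau)\,d\tau\,ds\leq T\int_{0}^{t}\varphi(s)\,ds$ gives a constant $h=(L_A+L_BT)/(m_C-\alpha_1)>0$ (here $m_C>\alpha_1$ is essential) with
$$\|(\Lambda \tilde u_1)(t)-(\Lambda \tilde u_2)(t)\|_V\leq h\int_{0}^{t}\|\tilde u_1(s)-\tilde u_2(s)\|_V\, ds,\qquad\forall t\in I.$$
Lemma \ref{lemma_hisdep} then delivers a unique fixed point of $\Lambda$, completing the proof.

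The main obstacle is the bookkeeping in the pointwise estimate for $\dot z_1-\dot z_2$: one must ensure that the $\alpha_1\|\cdot\|_V^{2}$-type contribution coming from H(j)(b) is the one absorbed on the left via $m_C>\alpha_1$, while the $L_A$ and $L_B$ terms enter the right-hand side linearly so that Lemma \ref{lemma_hisdep} is applicable after integration. Once this estimate is in place, the rest is a routine application of the history-dependent fixed point lemma, and the continuity of $\tilde f_{\tilde u}$ (and thence of $\dot z_{\tilde u}$) needed to close the argument follows cleanly from the continuity-plus-Lipschitz hypotheses on $A$ and $B$.
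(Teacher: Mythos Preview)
Your proof is correct and follows essentially the same strategy as the paper: freeze the history terms by substituting a candidate function, invoke Lemma~\ref{Lemma_1} to solve the resulting pointwise problem, derive a history-dependent contraction estimate using H(C)(c), H(j)(b), H(A)(b), H(B)(b), and then apply Lemma~\ref{lemma_hisdep}. The only cosmetic difference is that the paper runs the fixed point on the velocity variable $\eta\in C(I;K_V)$ via $\Lambda\eta=\dot u_{w\zeta\eta}$ (with the displacement reconstructed as $\int_0^t\eta$), whereas you run it on the displacement $\tilde u\in C(I;V)$ via $\Lambda\tilde u=u_0+\int_0^t\dot z_{\tilde u}$; this just shifts one integration from the definition of $\tilde f$ to the definition of $\Lambda$ and yields the same constant $h=(L_A+L_BT)/(m_C-\alpha_1)$.
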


 \begin{proof}
Let us fix $\eta \in C(I; K_V )$. We consider the following auxiliary for \eqref{eq_col}.
\begin{equation}\label{eq_col4_1}
\left\{\begin{array}{l}
\text { find } u_{w\zeta\eta}:(0, T) \rightarrow K_V \text { such that } \\
\langle C(t,\dot{u}_{w\zeta\eta}(t)),v-\dot{u}_{w\zeta\eta}(t)\rangle_{V^*\times V} +j(w(t),\dot{u}_{w\zeta\eta}(t),v)-j(w(t),\dot{u}_{w\zeta\eta}(t),\dot{u}_{w\zeta\eta}(t))\\
\quad \mbox{}\geq \langle f_\eta(t),v-\dot{u}_{w\zeta\eta}(t)\rangle_{V^*\times V}, \quad \forall t \in I, \; \forall v \in K_V,\\
\text { where }  u_{w\zeta\eta}(0)=u_0,
\end{array}\right.
\end{equation}
where $f_\eta$ is defined by
$$f_\eta(t):=f(t)-\int_0^tB\left(t-s,\int_0^s\eta(s)ds,\zeta(s)\right)-A\left(t,\int_0^t\eta(s)ds\right).$$

We first claim that $f_\eta\in C(I;V^*)$. For any given $ t_1, \;t_2\in I$ with $t_1<t_2$, it follows from the definition of $f_\eta$ that
\begin{eqnarray}
&& \|f_\eta(t_2)-f_\eta(t_1)\|_V^* \nonumber\\
 &\leq& \|f(t_1)-f(t_2)\|+ \left\|\int_0^{t_2}B\left(t_2-s,\int_0^s\eta(s)ds,\zeta(s)\right)ds-\int_0^{t_1}B\left(t_1-s,\int_0^s\eta(s)ds,\zeta(s)\right)ds\right\|\nonumber\\
 &&\mbox{}+\left\|A\left(t_2,\int_0^{t_2}\eta(s)ds\right)-A\left(t_2,\int_0^{t_1}\eta(s)ds\right)+A\left(t_2,\int_0^{t_1}\eta(s)ds\right)-A\left(t_1,\int_0^{t_1}\eta(s)ds\right)\right\|\nonumber\\
 &\leq&\|f(t_1)-f(t_2)\|+\int_{t_1}^{t_2}\left\|B\left(t_2-s,\int_0^s\eta(s)ds,\zeta(s)\right)\right\|ds\nonumber\\
 &&\mbox{}+\int_{0}^{t_1}\left\|B\left(t_2-s,\int_0^s\eta(s)ds,\zeta(s)\right)ds-B\left(t_1-s,\int_0^s\eta(s)ds,\zeta(s)\right)\right\|ds\nonumber\\
 &&\mbox{}+\left\|A\left(t_2,\int_0^{t_1}\eta(s)ds\right)-A\left(t_1,\int_0^{t_1}\eta(s)ds\right)\right\|+L_A\left\|\int_{t_1}^{t_2}\eta(s)ds\right\|.
\end{eqnarray}
By condition H(B)(c) and  H\"{o}lder's inequality, we have
\begin{eqnarray}
&&\int_{t_1}^{t_2}\left\|B\left(t_2-s,\int_0^s\eta(s)ds,\zeta(s)\right)\right\|ds\nonumber\\
&\leq&\int_{t_1}^{t_2}|\rho(t_2-s)|\left\|\int_0^s\eta(s)ds\right\|ds+\int_{t_1}^{t_2}|\rho(t_2-s)|\left\|\zeta(s)\right\|ds\nonumber\\
&\leq&T\|\eta\|_{C^(I;V)}\left(\sqrt{t_2-t_1}\right) \left(\int_{t_1}^{t_2}|\rho(t_2-s)|^2ds\right)^{\frac{1}{2}}+ \left(\int_{t_1}^{t_2}|\rho(t_2-s)|^2ds\right)^{\frac{1}{2}}\left(\int_{t_1}^{t_2}\|\zeta(s)\|^2ds\right)^{\frac{1}{2}}\nonumber\\
&\leq&T^\frac{3}{2}\|\eta\|_{C^(I;V)}\|\rho\|_{L^2(t_1,t_2;\mathbb{R}^+)}+\|\rho\|_{L^2(t_1,t_2;\mathbb{R}^+)}\|\zeta(s)\|_{L^2(t_1,t_2;Y)}\nonumber\\
&&\mbox{}\rightarrow0\;\mbox{as $|t_1-t_2|\rightarrow0$}.
\end{eqnarray}
According to condition H(B)(a), one has
\begin{eqnarray}
\int_{0}^{t_1}\left\|B\left(t_2-s,\int_0^s\eta(s)ds,\zeta(s)\right)ds-B\left(t_1-s,\int_0^s\eta(s)ds,\zeta(s)\right)\right\|ds\rightarrow0\;\mbox{as $|t_1-t_2|\rightarrow0$}.
\end{eqnarray}
By the continuity of $f$ and $A(t,\cdot)$, we know that $\lim_{|t_1-t_2|\rightarrow0}\|f_\eta(t_2)-f_\eta(t_1)\|_V^*=0$ and so $f_\eta\in C(I;V^*)$.

Now it follows from Lemma \ref{Lemma_1} that problem  \eqref{eq_col4_1} has a unique solution $\dot{u}_{w\zeta\eta}\in C(I;K_V)$. Thus, we can define an operator $\Lambda:\eta\mapsto  \dot{u}_{w\zeta\eta}$, where $\eta\in C(I;V)$ and $\dot{u}_{w\zeta\eta}(t)$ is the unique solution of problem \eqref{eq_col4_1}.
Next we show that $\Lambda$ has a unique fixed point in $C(I;V)$. In fact, let $\dot{u}_0(t)$ and $\dot{u}_1(t)$ be two solutions of  \eqref{eq_col4_1}. Then
\begin{align}\label{eq_col4_2}
\langle C(t,\dot{u}_i(t)),v-\dot{u}_i(t)\rangle_{V^*\times V} +j(w,\dot{u}_i(t),v)-j(w,u\dot{}_i(t),\dot{u}_i(t))
\geq \langle f_{\eta_i}(t),v-\dot{u}_i(t)\rangle_{V^*\times V},\; i=0,1
\end{align}
for all $v \in K_V$  and all $t\in I$. Testing \eqref{eq_col4_2} with $\dot{u}_{1-i}(t)$, we obtain
\begin{eqnarray}\label{eq_col4_3}
&&\langle C(t,\dot{u}_0(t))-C(t,\dot{u}_1(t)),\dot{u}_0-\dot{u}_1(t)\rangle_{V^*\times V}\nonumber\\
&\leq& j(w,\dot{u}_0(t),\dot{u}_1(t))+j(w,\dot{u}_1(t),\dot{u}_0(t))-j(w,\dot{u}_1(t),\dot{u}_1(t))\nonumber\\
&&\mbox{}-j(w,\dot{u}_0(t),\dot{u}_0(t))+\langle f_{\eta_1}- f_{\eta_0},\dot{u}_0(t)-\dot{u}_1(t)\rangle_{V^*\times V}
\end{eqnarray}
for all $t \in I$.  According to conditions H(A)(b), H(B)(b) and H(j)(b), inequality \eqref{eq_col4_3} yields
\begin{eqnarray}
&& m_C\| \dot{u}_0(t)-\dot{u}_1(t)\|^2_{ V}\nonumber\\
&\leq&\alpha_1\|\dot{u}_0(t)-\dot{u}_1(t)\|^2_V+\|f_{\eta_1}(t)- f_{\eta_0}(t)\|_{V^*}\|\dot{u}_0(t)-\dot{u}_1(t)\|_V\nonumber\\
&\leq&\alpha_1\|\dot{u}_0(t)-\dot{u}_1(t)\|^2_V++\left\|A\left(t,\int_0^t\eta_0(s)ds\right)-A\left(t,\int_0^t\eta_1(s)ds\right)\right\|_V\|\dot{u}_0(t)-\dot{u}_1(t)\|_V\nonumber\\
&&\mbox{}+\left\|\int_0^{t_2}B\left(t-s,\int_0^s\eta_0(s)ds,\zeta(s)\right)ds-\int_0^{t_1}B\left(t-s,\int_0^s\eta_1(s)ds,\zeta(s)\right)ds\right\|_V\|\dot{u}_0(t)-\dot{u}_1(t)\|_V\nonumber\\
&\leq&\alpha_1\|\dot{u}_0(t)-\dot{u}_1(t)\|^2_V+L_A\int_0^t\|\eta_0(s)-\eta_1(s)\|_Vds\|\dot{u}_0(t)-\dot{u}_1(t)\|_V\nonumber\\
&&\mbox{}+L_B\int_0^t\int_0^t\|\eta_0(s)-\eta_1(s)\|_Vdlds\|\dot{u}_0(t)-\dot{u}_1(t)\|_V\nonumber\\
&\leq&\alpha_1\|\dot{u}_0(t)-\dot{u}_1(t)\|^2_V+(L_A+TL_B)\int_0^t\|\eta_0(s)-\eta_1(s)\|_Vds\|\dot{u}_0(t)-\dot{u}_1(t)\|_V
\end{eqnarray}
for all $t \in I$. Hence
\begin{align*}
\| \dot{u}_0(t)-\dot{u}_1(t)\|_{ V}\leq \frac{L_A+TL_B}{ (m_C-\alpha_1)}\int_0^t\|\eta_1(s)-\eta_2(s)\|_{V}ds.
\end{align*}
From Lemma \ref{lemma_hisdep} we can see that $\Lambda$ has a unique fixed point $\dot{u}_{w\zeta}\in C(I;K_V)$. Since  $u_{w\zeta}(0)=u_0$, we know that \eqref{eq_col} has a unique solution $u_{w\zeta}\in C^1(I;K_V)$.
This completes the proof.
\end{proof}
\begin{remark}\label{r3.1}
We need some useful inequalities for the solution $u_{w\zeta}$ of problem \eqref{eq_col}. Let $w_1,\;w_2\in C(I;W)$ and $\zeta_1,\;\zeta_2\in H^1(I;Y_1)\cap L^2(I;Y)$. Replacing $w=w_i$ and $\zeta=\zeta_i$ with $i=1,\;2$ in problem \eqref{eq_col} and applying H\"{o}lder's inequality, we have
\begin{eqnarray}
(m_C-\alpha_1)\|\dot{u}_{w_1\zeta_1}(t)-\dot{u}_{w_2\zeta_2}(t)\|
&\leq&
L_B\int_0^t\int_0^t\|\dot{u}_{w_1\zeta_1}(s)-\dot{u}_{w_2\zeta_2}(s)\|ds+L_B\int_0^t\|\zeta_1(s)-\zeta_2(s)\|ds\nonumber\\
&&\mbox{}+\alpha_0\|w_{1}(t)-w_{2}(t)\|+L_A\int_0^t\|\dot{u}_{w_1\zeta_1}(s)-\dot{u}_{w_2\zeta_2}(s)\|ds\nonumber\\
&\leq&
(L_A+L_Bt)\int_0^t\|\dot{u}_{w_1\zeta_1}(s)-\dot{u}_{w_2\zeta_2}(s)\|ds+\mbox{}+\alpha_0\|w_{1}(t)-w_{2}(t)\|\nonumber\\
&&\mbox{}+L_B\sqrt t\left(\int_{0}^{t}\|\zeta_1(s)-\zeta_2(s)\|^2ds\right)^{\frac{1}{2}}ds\
\end{eqnarray}
and so
\begin{eqnarray}
\|\dot{u}_{w_1\zeta_1}(t)-\dot{u}_{w_2\zeta_2}(t)\|&\leq&
\frac{L_A+L_Bt}{m_C-\alpha_1}\int_0^t \|\dot{u}_{w_1\zeta_1}(s)-\dot{u}_{w_2\zeta_2}(s)\|ds+\frac{L_B\sqrt t}{m_C-\alpha_1}\|\zeta_1-\zeta_2\|_{L^2(I;Y_1)}\nonumber\\
&&\mbox{}+\frac{\alpha_0}{m_C-\alpha_1}\|w_{1}(t)-w_{2}(t)\|.
\end{eqnarray}
Applying Gronwall's inequality yields
\begin{eqnarray}
&&\|\dot{u}_{w_1\zeta_1}(t)-\dot{u}_{w_2\zeta_2}(t)\|\nonumber\\
&\leq&\frac{(L_A+L_Bt)\alpha_0}{(m_C-\alpha_1)^2}e^{\frac{2L_At+L_Bt^2}{2(m_C-\alpha_1)}}\int_0^t \|w_{1 }(s)-w_{2 }(s)\|ds+\frac{\alpha_0}{m_C-\alpha_1}\|w_{1}(t)-w_{2}(t)\|\nonumber\\
&&\mbox{}+\left(\frac{L_B \sqrt t}{m_C-\alpha_1}+\frac{2L_B t^\frac{3}{2}(L_A+L_Bt)}{3(m_C-\alpha_1)^2}e^{\frac{2L_At+L_Bt^2}{2(m_C-\alpha_1)}}\right)\|\zeta_1-\zeta_2\|_{L^2(I;Y_1)}.
\end{eqnarray}
\end{remark}

In order to prove Theorem \ref{t3.1}, we also need to introduce the following auxiliary problem.
\begin{problem}\label{problem_aux_2}
Find $u_\zeta:I\rightarrow K_V$ and $w_\zeta:I\rightarrow W$ such that, for all $t\in I$,
\begin{align*}
&\dot{w}_{\zeta}(t)=F(t,w_{\zeta}(t),\dot{u}_{\zeta}(t)),\nonumber\\
&\left\langle A(t,u_{\zeta}(t)) +\int_0^t B(t-s,u_{\zeta}(s)),\zeta(s))ds+C(t,\dot{u}_{\zeta}(t)),v-\dot{u}_{\zeta}(t) \right\rangle_{V^*\times V}\nonumber\\
&\quad \mbox{}+j(w_{\zeta}(t) ,\dot{u}_{\zeta}(t) ,v)-j(w_{\zeta}(t) ,\dot{u}_{\zeta}(t) ,\dot{u}_{\zeta}(t) )\geq \langle f(t) ,v-\dot{u}_{\zeta}(t) \rangle_{V^*\times V},\quad\forall v\in K_V,
\end{align*}
where $u_\zeta(0)=u_0$, $w_\zeta(0)=w_0$ and $\zeta\in C(I;K_Y)$.
\end{problem}
\begin{lemma}\label{lemma_aux_2}
Suppose that H(A), H(B), H(C), H(j) and H(F) hold and $ m_C>\alpha_1$. Then for any given $\zeta\in H^1(I;Y_1)\cap L^2(I;Y)$ and $f\in C(I;V^*)$,
Problem \ref{problem_aux_2} has a unique solution  $(u_\zeta,w_\zeta)\in C^1(I;K_V)\times C^1(I;W)$.
\end{lemma}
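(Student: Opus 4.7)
The plan is to reduce Problem \ref{problem_aux_2}, for fixed $\zeta\in H^1(I;Y_1)\cap L^2(I;Y)$, to a fixed point problem on $C(I;W)$ and then invoke Lemma \ref{lemma_hisdep}. Given $w\in C(I;W)$, Lemma \ref{Corollary_1} supplies a unique $u_{w\zeta}\in C^1(I;K_V)$ satisfying \eqref{eq_col}, and in particular $\dot u_{w\zeta}\in C(I;V)$. Treating this as a datum, the nonlinear ordinary differential equation
\[
\dot{\tilde w}(t)=F(t,\tilde w(t),\dot u_{w\zeta}(t)),\qquad \tilde w(0)=w_0,
\]
has a right-hand side continuous in $t$ by H(F)(a) and Lipschitz in $\tilde w$ uniformly in $t$ by H(F)(b), so the classical Picard--Lindel\"of theorem yields a unique solution $\tilde w\in C^1(I;W)$. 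Define $\Psi:C(I;W)\to C(I;W)$ by $\Psi(w)=\tilde w$. Any fixed point $w_\zeta$ of $\Psi$, together with $u_\zeta:=u_{w_\zeta\zeta}$, is then a solution of Problem \ref{problem_aux_2}; conversely, any solution gives a fixed point, so the stated existence and uniqueness will follow once $\Psi$ is shown to have a unique fixed point.

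To apply Lemma \ref{lemma_hisdep}, I must verify a history-dependent Lipschitz estimate for $\Psi$. Pick $w_1,w_2\in C(I;W)$, set $u_i=u_{w_i\zeta}$ and $\tilde w_i=\Psi(w_i)$. Integrating the ODE and applying H(F)(b) gives
\[
\|\tilde w_1(t)-\tilde w_2(t)\|_W\le L_F\int_0^t\bigl(\|\tilde w_1(s)-\tilde w_2(s)\|_W+\|\dot u_1(s)-\dot u_2(s)\|_V\bigr)\,ds.
\]
Specialising the estimate of Remark \ref{r3.1} to $\zeta_1=\zeta_2=\zeta$ bounds $\|\dot u_1(s)-\dot u_2(s)\|_V$ by the sum of an integral term $c_1\int_0^s\|w_1(\tau)-w_2(\tau)\|_W\,d\tau$ and a pointwise term $c_2\|w_1(s)-w_2(s)\|_W$, with $c_1,c_2$ depending only on $T$, $L_A$, $L_B$, $\alpha_0$, $\alpha_1$ and $m_C$. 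After substitution, both contributions sit inside the outer $\int_0^t ds$ and collapse into $c_3\int_0^t\|w_1(s)-w_2(s)\|_W\,ds$, so Gronwall's inequality absorbs the self-coupling term $L_F\int_0^t\|\tilde w_1-\tilde w_2\|_W\,ds$ and delivers
\[
\|\tilde w_1(t)-\tilde w_2(t)\|_W\le h\int_0^t\|w_1(s)-w_2(s)\|_W\,ds,\qquad t\in I,
\]
with $h=c_3 e^{L_F T}$. This is exactly the hypothesis of Lemma \ref{lemma_hisdep}, which therefore furnishes a unique fixed point $w_\zeta\in C(I;W)$; the regularity $w_\zeta\in C^1(I;W)$ is then read off from the ODE and $u_\zeta\in C^1(I;K_V)$ from Lemma \ref{Corollary_1}.

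The only genuine subtlety is the interplay between the pointwise term $c_2\|w_1(s)-w_2(s)\|_W$ inherited from Remark \ref{r3.1} and Lemma \ref{lemma_hisdep}'s requirement that the estimate for $\Psi$ be purely history-dependent (that is, an integral of $\|w_1-w_2\|_W$). This is the main obstacle, and it is resolved by observing that the ODE satisfied by $\tilde w_i$ couples to $\dot u_i$ only through a time integral, so any pointwise $w$-dependence surviving from the variational inequality is automatically integrated in $s$; once this is recognised, the Gronwall step and the constant bookkeeping are routine.
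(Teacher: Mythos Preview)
Your argument is correct. The route differs from the paper's only in two technical choices, not in substance. First, the paper defines its fixed-point map directly as the integral operator
\[
(\Lambda w)(t)=w_0+\int_0^t F\bigl(s,w(s),\dot u_{w\zeta}(s)\bigr)\,ds
\]
on $C(I;W)$, without first invoking Picard--Lindel\"of; a fixed point of $\Lambda$ is automatically the ODE solution, so your preliminary ODE step is simply absorbed into the definition of the map. Second, for the contraction the paper does not pass through Gronwall and Lemma~\ref{lemma_hisdep}; instead it uses the equivalent weighted norm $\|w\|_\beta=\max_{t\in I}e^{-\beta t}\|w(t)\|_W$ and picks $\beta$ large so that the Lipschitz constant is below~$1$. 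Your combination of Gronwall (to remove the self-coupling in $\tilde w$) followed by Lemma~\ref{lemma_hisdep} achieves the same end and is equally clean; the weighted-norm trick is in fact how Lemma~\ref{lemma_hisdep} itself is typically proved, so the two arguments are essentially reparametrisations of one another.
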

\begin{proof}
Define an operator $G:C^1(I;W)\rightarrow C(I;K_V)$ by $G(w)(t)=\dot{u}_{w\zeta}(t)$, where $u_{w\zeta}$ is the solution of problem \eqref{eq_col}. By the proof of Lemma \ref{Corollary_1}, we know that $G$ is well-defined.
Then we need to illustrate that there exists a unique $w_\zeta\in C^1(I;W)$ such that
\begin{align}\label{eq_lem4_2}
\dot{w}_{\zeta}(t)=F(t,w_{\zeta}(t),G(w_\zeta)(t)).
\end{align}
Moreover, $(G(w_\zeta),w_\zeta)\in C(I;K_V)\times C^1(I;W)$ is the unique solution of
Problem \ref{problem_aux_2}.

For this purpose, we consider an operator $\Lambda : C(I;W)\rightarrow C^1(I;K_V)$ defined as follows:
$$\Lambda w_{\zeta}(t)=\int_{0}^{t} F(s,w_{\zeta}(s),G(w_\zeta)(s))ds +w_0, \;\forall t \in[0, T].$$
According to the definition of operator $G$ and conditions H(F)(a)(b) that
$\Lambda w_\zeta\in C^1(I;W)$ when $w_\zeta\in C(I;W)$. Because the fixed point of $\Lambda$ is the
solution of \eqref{eq_lem4_2}, we only need to prove that $\Lambda$ has a unique fixed point
in $C(I;W)$.

Let $w_{\zeta1},\;w_{\zeta2}\in C(I;W)$. We can draw a conclusion from $G$, $\Lambda$ and condition H(F)(b) that
\begin{align}\label{eq_lem4_3}
&\quad \| \Lambda w_{\zeta1}(t)- \Lambda w_{\zeta2}(t)\|\nonumber\\
&=\left\|\int_{0}^{t} F(s,w_{\zeta1}(s),G(w_{\zeta1})(s))ds-\int_{0}^{t} F(s,w_{\zeta2}(s),G(w_{\zeta2})(s))ds\right\|\nonumber\\
&\leq\int_{0}^{t}\left\| F(s,w_{\zeta1}(s),G(w_{\zeta1})(s))ds- F(s,w_{\zeta2}(s),G(w_{\zeta2})(s))\right\| ds.\nonumber\\
&\leq\int_0^tL_F\left(\|w_{\zeta1}(s)-w_{\zeta2}(s)\|+\|G(w_{\zeta1})(s)-G(w_{\zeta2})(s)\|\right)ds.
\end{align}
Denote
$$c_q=\frac{(L_A+L_BT)\alpha_0}{(m_C-\alpha_1)^2}e^{\frac{2L_AT+L_BT^2}{2(m_C-\alpha_1)}}, \quad c_p=\frac{\alpha_0}{m_A-\alpha_1}.$$
Then it follows from Remark \ref{r3.1} that
$$\|G(w_{\zeta1})(s)-G(w_{\zeta2})(s)\|
\leq c_q\int_0^s \|w_{\zeta1}(l)-w_{\zeta2}(l)\|dl
+c_p\|w_{\zeta1}(s)-w_{\zeta2}(s)\|.$$
These inequalities give
\begin{align}\label{eq_lem4_4}
&\quad \| \Lambda w_{\zeta1}(t)- \Lambda w_{\zeta2}(t)\|\nonumber\\
&\leq (L_F+L_F c_p)\int_0^t\|w_{\zeta1}(s)-w_{\zeta2}(s)\|ds
+L_Fc_q\int_0^t\int_0^s \|w_{\zeta1}(l)-w_{\zeta2}(l)\|dlds
\end{align}
and so
\begin{align}\label{eq_lem4_5}
 \| \Lambda w_{\zeta1}(t)- \Lambda w_{\zeta2}(t)\|_\beta\leq
\left(\frac{L_F+L_F c_p}{\beta}\right)\left\|w_{\zeta1}-w_{\zeta2}\right\|_\beta
+\frac{L_Fc_q}{\beta^2} \left\|w_{\zeta1}-w_{\zeta2}\right\|_\beta
\end{align}
with $\beta>0$ and
$\|w\|_{\beta}=\max _{t \in I} e^{-\beta t}\|w(t)\|_{W}$ for all $w \in C(I ; W).$
Thus, $\|\cdot\|_{\beta}$ is an equivalent norm in $C(I ; W)$.
From inequality \eqref{eq_lem4_5}, we know that $\Lambda$ is a contraction operator on $C(I ; W)$ and $C(I ; W)$
endowed with the norm $\|\cdot\|_{\beta}$. Thus, $\Lambda$ has a unique fixed point $w_\zeta\in C(I;W)$ and so $(w_\zeta,G (w_\zeta))$ solves the Problem \ref{problem_aux_2}, which concludes the proof.
\end{proof}

\begin{remark}\label{r3.2}
We need some useful inequalities for the solution $u_{\zeta}$ of Problem \ref{problem_aux_2}. For any given $\zeta_1,\;\zeta_2\in H^1(I;Y_1)\cap L^2(I;Y)$, let $(\dot{u}_{\zeta_i},w_{\zeta_i})\in C(I;V)\times C^1(I;W)$ be the unique solution of Problem \ref{problem_aux_2} with $\zeta=\zeta_i$ for $i=1,\;2.$
From the proof of Lemma \ref{lemma_aux_2}, we have
\begin{eqnarray}\label{remark_es1}
&&\|w_{\zeta_1}(t)-w_{\zeta_2}(t)\|\nonumber\\
&\leq& (L_F+L_F c_p)\int_0^t \|w_{\zeta_1}(s)-w_{\zeta_2}(s)\|ds
+L_Fc_q\int_0^t\int_0^s \|w_{\zeta_1}(l)-w_{\zeta_2}(l)\|dlds
+L_Fc_r T\|\zeta_1-\zeta_2\|_{L^2(I,Y_1)}\nonumber\\
&\leq& (L_F+L_F c_p+L_Fc_qT)\int_0^t \|w_{\zeta_1}(s)-w_{\zeta_2}(s)\|ds+L_Fc_r T\|\zeta_1-\zeta_2\|_{L^2(I,Y_1)}
\end{eqnarray}
and
\begin{eqnarray}\label{remark_es2}
\|\dot{u}_{\zeta_1}(t)-\dot{u}_{\zeta_2}(t)\|
&\leq&  c_q\int_0^t \|w_{\zeta_1}(s)-w_{\zeta_2}(s)\|ds
+c_p\|w_{\zeta_1}(t)-w_{\zeta_2}(t)\|+c_r\|\zeta_1-\zeta_2\|_{L^2(I,Y_1)}.\nonumber\\
&\leq& ( c_qT+c_p) \|w_{\zeta_1}(s)-w_{\zeta_2}(s)\|+c_r\|\zeta_1-\zeta_2\|_{L^2(I,Y_1)},
\end{eqnarray}
where
$$c_r=\frac{L_B \sqrt t}{m_C-\alpha_1}+\frac{2L_B t^\frac{3}{2}(L_A+L_Bt)}{3(m_C-\alpha_1)^2}e^{\frac{2L_At+L_Bt^2}{2(m_C-\alpha_1)}}.$$
By Gronwall's inequality, it follows from \eqref{remark_es1} that
\begin{eqnarray}\label{remark_es3}
\|w_{\zeta_1}(t)-w_{\zeta_2}(t)\|
\leq \left(L_Fc_r T\|\zeta_1-\zeta_2\|_{L^2(I,Y_1)}\right)e^{(L_F+L_F c_p+L_Fc_qT)T}.
\end{eqnarray}
Combining \eqref{remark_es2} and \eqref{remark_es3} yields
\begin{eqnarray*}
\|\dot{u}_{\zeta_1}(t)-\dot{u}_{\zeta_2}(t)\|_V
\leq  \left(( L_Fc_qc_rT^2+L_Fc_pc_rT)e^{(L_F+L_F c_p+L_Fc_qT)T}+c_r\right)\|\zeta_1-\zeta_2\|_{L^2(I,Y_1)}.
\end{eqnarray*}
\end{remark}

Moreover, for the unique solution $(u_\zeta,w_\zeta)\in C^1(I;K_V)\times C^1(I;W)$ of Problem \ref{problem_aux_2}, we need to consider the following auxiliary problem.
\begin{problem}\label{problem_last}
  Find $\zeta:I\rightarrow K_Y$ such that
 \begin{eqnarray}
\langle \dot{\zeta}(t) ,\eta-\zeta(t) \rangle_{Y_1}+a(\zeta(t) ,\eta-\zeta(t) )\geq\langle\phi(t,u_\zeta(t) ,\zeta(t) ),\eta-\zeta(t) \rangle_{Y_1},\quad\forall\eta\in K_Y,
\end{eqnarray}
with $\zeta(0)=\zeta_0\in K_Y.$
\end{problem}
\begin{remark}\label{remark_solu}
We observe that if $\zeta\in H^1(I;Y_1)\cap L^2(I;Y)$ is a unique solution of Problem \ref{problem_last}, then $(\zeta,u_\zeta,w_\zeta)\in  H^1(I;Y_1)\cap L^2(I;Y)\times C^1(I;K_Y)\times C^1(I;W)$ is a unique solution of DQVIs \eqref{1.1}.
\end{remark}

In order to prove Lemma \ref{lemma_last}, we need the following Lemma given in \cite{Han2016}.
\begin{lemma} \cite{Han2016} \label{lemma_zeta}
Suppose that condition H(a) holds. Then for any given $\lambda\in L^2(I;Y_1)$, there exists a unique $\zeta\in H^1(I;Y_1)\cap L^2(I;Y)$ such that
\begin{eqnarray}\label{equ_last_theo}
\langle \dot{\zeta}(t),\eta-\zeta(t) \rangle_{Y_1}+a(\zeta(t) ,\eta-\zeta(t) )\geq\langle\lambda(t),\eta-\zeta(t) \rangle_{Y_1},\quad\forall\eta\in K_Y,
\end{eqnarray}
with $\zeta(0)=\zeta_0\in K_Y$.  Moreover, if $\zeta_i$ is the unique solution to problem \eqref{equ_last_theo} for $\lambda_i\in L^2(I;Y_1)$ with $i=1,2$, then
\begin{eqnarray}
\left\|\zeta_{1}(t)-\zeta_{2}(t)\right\|_{Y_1}^{2} \leq d_{1} \int_{0}^{t}\left\|\lambda_{1}(s)-\lambda_{2}(s)\right\|_{Y_1}^{2} d s \quad \text { for a.e. } t \in(0, T)
\end{eqnarray}
with $d_1>0.$
\end{lemma}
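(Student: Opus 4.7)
The plan is to treat the lemma in two stages: first establish existence and uniqueness of $\zeta$, then derive the continuous-dependence estimate by a standard energy argument. Throughout, I would work on the Gelfand triple $(Y,Y_1,Y^*)$ with $Y$ compactly and densely embedded in $Y_1$, and exploit the shift structure built into H(a).

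For existence and uniqueness, I would first overcome the lack of coercivity of $a(\cdot,\cdot)$ on $Y$ by working with the shifted form $\tilde{a}(u,v):=a(u,v)+a_1(u,v)_{Y_1}$, which by H(a) is continuous, bilinear, symmetric and satisfies $\tilde{a}(\eta,\eta)\geq a_2\|\eta\|_Y^2$, hence is coercive on $Y$. The variational inequality is then equivalent to
\begin{equation*}
\langle\dot{\zeta}(t),\eta-\zeta(t)\rangle_{Y_1}+\tilde{a}(\zeta(t),\eta-\zeta(t))\geq \langle\lambda(t)+a_1\zeta(t),\eta-\zeta(t)\rangle_{Y_1},\quad\forall\eta\in K_Y.
\end{equation*}
Given any $\xi\in L^2(I;Y_1)$, the coercive parabolic variational inequality with right-hand side $\lambda+a_1\xi$ admits a unique solution in $H^1(I;Y_1)\cap L^2(I;Y)$ by the classical Lions–Brezis theory for coercive evolutionary variational inequalities of the first kind (obtained, e.g., by implicit time discretization plus Stampacchia's theorem at each step, followed by uniform energy estimates and passage to the limit using the compact embedding). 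Defining $\Psi:L^2(I;Y_1)\to L^2(I;Y_1)$ by sending $\xi$ to this solution, I would show that $\Psi$ is a contraction in the equivalent norm $\|v\|_\beta:=\max_{t\in I}e^{-\beta t}\|v(t)\|_{Y_1}$ for $\beta>0$ large enough, using the same stability-style computation as below applied to two inputs $\xi_1,\xi_2$. The unique fixed point is the desired $\zeta$, and it inherits the regularity $\zeta\in H^1(I;Y_1)\cap L^2(I;Y)$.

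For the stability estimate, given solutions $\zeta_1,\zeta_2$ corresponding to $\lambda_1,\lambda_2$ with the common initial datum $\zeta_0$, I would test the inequality for $\zeta_1$ with $\eta=\zeta_2$ and that for $\zeta_2$ with $\eta=\zeta_1$, then add. The cross duality-pairing terms collapse to $\langle\dot{\zeta}_1-\dot{\zeta}_2,\zeta_1-\zeta_2\rangle_{Y_1}=\tfrac{1}{2}\tfrac{d}{dt}\|\zeta_1-\zeta_2\|_{Y_1}^2$, which yields
\begin{equation*}
\tfrac{1}{2}\tfrac{d}{dt}\|\zeta_1-\zeta_2\|_{Y_1}^2+a(\zeta_1-\zeta_2,\zeta_1-\zeta_2)\leq \langle\lambda_1-\lambda_2,\zeta_1-\zeta_2\rangle_{Y_1}.
\end{equation*}
Applying H(a) to bound $a(\zeta_1-\zeta_2,\zeta_1-\zeta_2)\geq a_2\|\zeta_1-\zeta_2\|_Y^2-a_1\|\zeta_1-\zeta_2\|_{Y_1}^2$, Young's inequality on the right-hand side to absorb the $Y_1$-norm of the difference, and then integrating from $0$ to $t$ with $\zeta_1(0)=\zeta_2(0)$, I would obtain
\begin{equation*}
\|\zeta_1(t)-\zeta_2(t)\|_{Y_1}^2\leq C_1\int_0^t\|\zeta_1(s)-\zeta_2(s)\|_{Y_1}^2\,ds+\int_0^t\|\lambda_1(s)-\lambda_2(s)\|_{Y_1}^2\,ds
\end{equation*}
for an explicit constant $C_1>0$ depending on $a_1$. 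Gronwall's inequality then produces the stated bound with $d_1=e^{C_1 T}$.

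The main obstacle is the first stage: producing, for a fixed right-hand side, a solution of the coercive parabolic variational inequality in $H^1(I;Y_1)\cap L^2(I;Y)$. This is delicate because one has to combine a time discretization (or Yosida-type regularization) with compactness on $Y\hookrightarrow Y_1$ and with the unilateral constraint $\zeta(t)\in K_Y$, and these technicalities are exactly the content of the reference \cite{Han2016}. Once that ingredient is accepted, the contraction fixed-point argument and the energy estimate are short and essentially mechanical.
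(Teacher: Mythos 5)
The paper does not prove this lemma at all: it is imported verbatim from the reference \cite{Han2016}, so there is no in-paper argument to compare yours against. Your reconstruction is nevertheless a sound and essentially standard outline of how such a result is established, and you correctly isolate the one genuinely deep ingredient --- unique solvability of the \emph{coercive} parabolic variational inequality with constraint set $K_Y$ in the class $H^1(I;Y_1)\cap L^2(I;Y)$ --- as the step that must be taken from the literature. The shift $\tilde{a}(u,v)=a(u,v)+a_1(u,v)_{Y_1}$ to restore coercivity, the fixed-point iteration to reabsorb the lower-order term $a_1\zeta$, and the cross-testing plus Young plus Gronwall computation for the stability constant $d_1$ are all correct; in particular the test functions $\eta=\zeta_2(t)$ and $\eta=\zeta_1(t)$ are admissible since both solutions take values in $K_Y$, and dropping the nonnegative term $a_2\|\zeta_1-\zeta_2\|_Y^2$ after applying H(a) gives exactly the Gronwall-ready inequality you state.

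One small inconsistency worth fixing: you declare $\Psi:L^2(I;Y_1)\to L^2(I;Y_1)$ but then propose to contract in the norm $\|v\|_\beta=\max_{t\in I}e^{-\beta t}\|v(t)\|_{Y_1}$, which is a $C(I;Y_1)$-norm and is not equivalent to any norm on $L^2(I;Y_1)$. Since the output of $\Psi$ lies in $H^1(I;Y_1)\subset C(I;Y_1)$, the cleanest repair is to run the fixed-point argument on $C(I;Y_1)$ with that weighted sup-norm, or alternatively to keep $L^2(I;Y_1)$ and use the weighted norm $\bigl(\int_0^T e^{-2\beta t}\|v(t)\|_{Y_1}^2\,dt\bigr)^{1/2}$ together with the integral estimate from the stability computation. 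Either choice closes the argument; as written the contraction step mixes two incompatible ambient spaces.
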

\begin{lemma}\label{lemma_last}
 Suppose that conditions  H(A), H(B), H(C), H(j), H(F), H($\phi$), H(a) hold and $ m_C>\alpha_1$. Then Problem \ref{problem_last} has a unique solution $\zeta\in H^1(I;Y_1)\cap L^2(I;Y). $
\end{lemma}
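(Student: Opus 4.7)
The plan is to reformulate Problem~\ref{problem_last} as a fixed-point equation for a self-map $\Psi$ defined on a suitable subset of $L^2(I;Y_1)$, and then to invoke the Banach contraction principle with a Bielecki-type exponentially weighted norm. For any admissible input $\xi\in H^1(I;Y_1)\cap L^2(I;Y)$, I would first invoke Lemma~\ref{lemma_aux_2} to obtain the unique pair $(u_\xi,w_\xi)\in C^1(I;K_V)\times C^1(I;W)$. Condition H($\phi$) guarantees that $\lambda_\xi(t):=\phi(t,u_\xi(t),\xi(t))$ lies in $L^2(I;Y_1)$, so Lemma~\ref{lemma_zeta} produces a unique $\zeta_\xi\in H^1(I;Y_1)\cap L^2(I;Y)$ solving the parabolic variational inequality driven by $\lambda_\xi$. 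Setting $\Psi(\xi):=\zeta_\xi$, fixed points of $\Psi$ are precisely the solutions of Problem~\ref{problem_last}, and the target regularity is automatic from Lemma~\ref{lemma_zeta}.

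For two inputs $\xi_1,\xi_2$ with images $\zeta_i=\Psi(\xi_i)$, the continuous dependence bound in Lemma~\ref{lemma_zeta} together with H($\phi$)(a) yields
$$\|\zeta_1(t)-\zeta_2(t)\|_{Y_1}^2\le 2d_1L_\phi^2\int_0^t\bigl(\|u_{\xi_1}(s)-u_{\xi_2}(s)\|_V^2+\|\xi_1(s)-\xi_2(s)\|_{Y_1}^2\bigr)\,ds.$$
Using $u_\xi(s)=u_0+\int_0^s\dot{u}_\xi(\tau)\,d\tau$ together with a localized version of Remark~\ref{r3.2} (obtained by rerunning the Gronwall argument with $t$ in place of $T$, which is legitimate because the operators $A$, $B$, $C$, $F$ all act causally in time), one produces an estimate of the form $\|u_{\xi_1}(s)-u_{\xi_2}(s)\|_V^2\le C_1\int_0^s\|\xi_1(\tau)-\xi_2(\tau)\|_{Y_1}^2\,d\tau$. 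Substituting and swapping the order of integration in the resulting double integral gives a bound of the form
$$\|\zeta_1(t)-\zeta_2(t)\|_{Y_1}^2\le M\int_0^t\|\xi_1(s)-\xi_2(s)\|_{Y_1}^2\,ds$$
with a constant $M>0$ depending only on the data.

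Equipping $L^2(I;Y_1)$ with the equivalent weighted norm $\|\xi\|_\beta^2:=\int_0^T e^{-\beta t}\|\xi(t)\|_{Y_1}^2\,dt$ and applying Fubini's theorem to the above estimate delivers $\|\Psi(\xi_1)-\Psi(\xi_2)\|_\beta^2\le(M/\beta)\|\xi_1-\xi_2\|_\beta^2$. Choosing $\beta>M$ makes $\Psi$ a contraction on the closed subspace $H^1(I;Y_1)\cap L^2(I;Y)$ (the image of $\Psi$ automatically lies in this subspace by Lemma~\ref{lemma_zeta}), and the Banach fixed-point theorem yields a unique fixed point $\zeta$, which is the asserted solution. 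The main obstacle I anticipate is precisely the sharpening of Remark~\ref{r3.2}: as stated, its right-hand side uses the full-interval norm $\|\xi_1-\xi_2\|_{L^2(I;Y_1)}$, which would only give a constant prefactor and is insufficient to close a contraction in $\beta$. The resolution is not conceptual but requires careful rebookkeeping of the Gronwall step in the proof of Lemma~\ref{lemma_aux_2} so that the estimate depends only on $\|\xi_1-\xi_2\|_{L^2(0,t;Y_1)}$, reflecting the causal nature of the history-dependent integrals.
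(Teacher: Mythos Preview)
Your proposal is correct and follows the same fixed-point strategy as the paper: build the self-map $\xi\mapsto\zeta_\xi$ by composing Lemma~\ref{lemma_aux_2} with Lemma~\ref{lemma_zeta}, then prove it contracts on $L^2(I;Y_1)$. The only difference is in the closing device: the paper appeals to Lemma~\ref{lemma_hisdep} (which is phrased for $C([0,T];X)$), whereas you use a Bielecki-weighted $L^2$ norm, which is the more natural choice in this $L^2$ setting. Your remark that Remark~\ref{r3.2} must first be sharpened to depend only on $\|\xi_1-\xi_2\|_{L^2(0,t;Y_1)}$ is precisely the point on which the contraction hinges; the paper's transition \eqref{equ_lem_zeta2}--\eqref{equ_lem_zeta3} implicitly uses this same causal localization but does not make it explicit, so your rebookkeeping is the honest justification.
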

\begin{proof}
For any given $\theta\in L^2(I;Y_1)$, it follows from Lemma \ref{lemma_aux_2} that Problem \ref{problem_aux_2} has a unique solution  $(u_\theta,w_\theta)\in C(I;K_V)\times C^1(I;W)$. Let $\phi_{\theta}(t):=\phi(t,u_{\theta}(t),\theta(t))$.  We claim that $\phi_{\theta}\in L^2(I;Y_1)$. Indeed, the condition H($\phi$)(a) derives the following inequality
 \begin{eqnarray}\label{equ_last1}
 \|\phi_\theta(t)\|^2_{Y_1}&\leq&2\|\phi(t,u_\theta(t),\theta(t))-\phi(t,0_V,0_{Y_1})\|^2_{Y_1}
 +2\|\phi(t,0_V,0_{Y_1})\|^2_{Y_1}\nonumber\\
 &\leq&2(L_{\phi}(\|u_\theta(t)\|_V+\|\theta\|_{Y_1}))^2+2\|\phi(t,0_V,0_{Y_1})\|^2_{Y_1}\nonumber\\
  &\leq&4L_{\phi}^2(\|u_\theta(t)\|^2_V+\|\theta\|^2_{Y_1}) +2\|\phi(t,0_V,0_{Y_1})\|^2_{Y_1}.
 \end{eqnarray}
Thus inequality \eqref{equ_last1} and condition H($\phi$)(b) show that
 \begin{eqnarray}
 \|\phi_\theta(t)\|^2_{L^2(I;Y_1)}\leq 4L^2_{\phi}T\|u_\theta(t)\|^2_{C(I:V)}+4L^2_{\phi}\|\theta\|^2_{L^2(I;Y_1)}+2\|\phi(t,0_V,0_{Y_1})\|^2_{L^2(I;Y_1)}
 \end{eqnarray}
 and so $\phi_{\theta}(t)\in L^2(I;Y_1)$. Taking $\lambda=\phi_\theta$ in Lemma \ref{lemma_zeta}, we know that there exists a unique $\zeta_\theta\in H^1(I;Y_1)\cap L^2(I;Y)$ such that inequality \eqref{equ_last_theo} holds.

Next we define an operator $\Lambda:L^2(I;Y_1)\rightarrow H^1(I;Y_1)\cap L^2(I;Y)$ by  $\Lambda(\theta)(t)=\zeta_\theta(t).$ Because $\left(Y, Y_1, Y^{*}\right)$ is a Gelfand triplet with dense and compact embeddings, one has $H^1(I;Y_1)\cap L^2(I;Y)\subset L^2(I;Y_1)$.
 The rest is to show that $\Lambda$ has a unique fixed point in $L^2(I;Y_1).$ To this end, let $\theta_i\in L^2(I;Y_1)$ with $i=1,\;2.$ Then Lemma \ref{lemma_zeta} yields
 \begin{eqnarray*}
\|\Lambda(\theta_1)(t)-\Lambda(\theta_2)(t)\|^2_{Y_1}&=&\|\zeta_{\theta_1}(t_1)-\zeta_{\theta_2}(t_2)\|^2_{Y_1}\nonumber\\
&\leq&d_{1} \int_{0}^{t}\left\|\phi(s,u_{\theta_1}(s),\theta_1(s))-\phi(s,u_{\theta_2}(s),\theta_2(s))\right\|_{Y_1}^{2} ds.
 \end{eqnarray*}
Using condition H($\phi$)(a) to the above inequality obtains
  \begin{eqnarray}\label{equ_lem_zeta1}
\|\Lambda(\theta_1)(t)-\Lambda(\theta_2)(t)\|^2_{Y_1}
&\leq&d_{1} \int_{0}^{t}\left\|\phi(s,u_{\theta_1}(s),\theta_1(s))-\phi(s,u_{\theta_2}(s),\theta_2(s))\right\|_{Y_1}^{2} ds\nonumber\\
&\leq&2 d_{1} L_{\phi}\int_{0}^{t}\left(\|u_{\theta_1}(s)-u_{\theta_2}(s)\|_{V}^{2} +\|\theta_1(s)-\theta_2(s)\|_{Y_1}^{2} \right)ds.
 \end{eqnarray}
 By Remark \ref{r3.2}, we can adapt the inequality \eqref{equ_lem_zeta1} as
 \begin{eqnarray}\label{equ_lem_zeta2}
\|\Lambda(\theta_1)(t)-\Lambda(\theta_2)(t)\|^2_{Y_1}
&\leq& 2d_{1} L_{\phi}\int_{0}^{t}\left(\int_{0}^{t}\|\dot{u}_{\theta_1}(s)-\dot{u}_{\theta_2}(s)\|_{V}^{2}ds +\|\theta_1(s)-\theta_2(s)\|_{Y_1}^{2} \right)ds\nonumber\\
&\leq&  c\|\theta_1-\theta_2\|^2_{L^2(I;Y_1)},
 \end{eqnarray}
where
$$c= 2d_{1} L_{\phi}\left(\left(( L_Fc_qc_rT^2+L_Fc_pc_rT)e^{(L_F+L_F c_p+L_Fc_qT)T}+c_r\right)^2T^2+1\right).$$
Thus inequality \eqref{equ_lem_zeta2} shows that
 \begin{eqnarray}\label{equ_lem_zeta3}
  \|\Lambda(\theta_1)(t)-\Lambda(\theta_2)(t)\|_{L^(I;Y_1)}\leq  \sqrt c\int_{0}^{t}\|\theta_1-\theta_2\|_{L^2(I;Y_1)}.
 \end{eqnarray}
Now it follows from Lemma \ref{lemma_hisdep} that there exists a unique element $\theta^*$ in $L^2(I;Y_1)$ such that $\theta^*$ is the unique fixed point of $\Lambda$, i.e., $\theta^*=\Lambda(\theta^*)=\zeta_{\theta^*}\in H^1(I;Y_1)\cap L^2(I;Y)$. Thus,  $\theta^*$ is a unique solution of Problem \ref{problem_last}. This completes the proof.
\end{proof}

Finally, we can give the proof of Theorem \ref{t3.1} as follows.
\begin{proof}
By Remark \ref{remark_solu} and Lemma \ref{lemma_last}, it is easy to see that $(\zeta,u_\zeta,w_\zeta)\in  (H^1(I;Y_1)\cap L^2(I;Y))\times C^1(I;K_Y)\times C^1(I;W)$ is a unique solution of DQVIs \eqref{1.1}.  This completes the proof of Theorem \ref{t3.1}.
\end{proof}

\section{An application to a contact problem for viscoelastic materials with long memory, damage and wear}
\setcounter{equation}{0}
In this section, we will use the results presented in Section 3 to solve a frictional contact problem of viscoelastic materials with long memory, damage and wear. Assume that $\Omega$ is a bounded and open domain in $\mathbb{R}^{d}(d=2,3)$ and its boundary $\Gamma:=\partial \Omega$ is Lipschitz continuous. For any $\bm{x}\in\Omega$,  we denote it as $\bm{x}=(x_1,x_2,\cdots,x_d)$. Let $\bm{\nu}$ denote the unit outward normal vector defined $a.e.$ on $\Gamma$.  The space of second order symmetric tensors on $\mathbb{R}^d$ is denoted by $\mathbb{S}^d$. Moreover, assume that $\mathbb{R}^{d}$ and $\mathbb{S}^{d}$ are equipped with the following inner products and norms
\begin{align*}
&\bm{u\cdot v}=u_{i}v_{i},\qquad\;\; \|\bm{v}\|_{\mathbb{R}^{d}} =(\bm{v\cdot v})^{\frac{1}{2}}, \qquad\forall   \bm{u}  ,\bm{v}  \in \mathbb{R}^{d},         \\
& \bm{\sigma\cdot \tau}=\sigma_{ij}\tau_{ij},\qquad \|\bm{\tau}\|_{\mathbb{S}^{d}} =(\bm{\tau\cdot \tau})^{\frac{1}{2}}, \qquad\forall   \bm{\sigma}  ,\bm{\tau} \in \mathbb{S}^{d},
\end{align*}
where repeated indices represent summation convention.

We use notations $\bm{u}=(u_i)$, $\bm{\sigma}=(\sigma_{ij})$ and $\bm{\varepsilon(u)}=(\varepsilon_{ij}(\bm{u}))=(\frac{1}{2}(u_{i,j}+u_{j,i})), i,j=1,2,\cdots,d$ to denote the displacement vector, the stress tensor and the linearized strain tensor, respectively, where $u_{i,j}\equiv \frac{\partial u_i}{\partial x_j}$. Here and below, the spatial derivative is defined in the sense of distribution. The normal and tangential components of stress field $\bm{\sigma}$ on $\Gamma$ are denoted by $\sigma_\nu=(\bm{\sigma\nu})\bm{\cdot}\bm{\nu}$ and $\bm{\sigma_\tau}=\bm{\sigma}\bm{\nu}-\sigma_\nu\bm{\nu}$, respectively. The normal and tangential components of the displacement field on $\Gamma$ are denoted by $u_\nu=\bm{u}\bm{\cdot}\bm{\nu}$ and $\bm{u_\tau}=\bm{u}-u_\nu\bm{\nu}$, respectively.
We are interested in the evolution of the body on the time interval $I:=[0,T]$, where $T>0$ is a constant. The time partial derivative for a function $f(\bm{x},t)$ is denoted by $\dot{f}(\bm{x},t)$. For the sake of simplicity, we usually omit the variable $\bm{x}$ in functions.

Now we are in the position to describe the physical background of viscoelastic materials' quasistatic frictional contact problems with damage, wear and long memory. We consider a viscoelastic body which occupies $\Omega$. The boundary $\partial \Omega$ is composed of three disjoint measurable parts $\Gamma_{1}$, $\Gamma_{2}$ and $\Gamma_{3}$ with $\mbox{meas}(\Gamma_{1})>0$.  Volume forces with density $\bm{f}_{0}$ act on $\Omega$ and surface tractions with density $\bm{f}_{2}$ act on $\Gamma_{2}$. On $\Gamma_1$, we suppose that the body is clamped, i.e., the displacement field disappears there.  The contact model is established by a normal compliance condition on $\Gamma_3$ with sliding Coulomb's law of dry friction and wear. The foundation is made of a  perfectly rigid material. The velocity of the foundation is denoted by $\bm{v}^*(t)\neq\bm{0}$.
Let $\boldsymbol{n}^{*}$ be the direction of the foundation's velocity and $\alpha$ be wear rate which are given respectively by
$$\boldsymbol{n}^{*}(t)=-\boldsymbol{v}^{*}(t) /\left\|\boldsymbol{v}^{*}(t)\right\|, \quad \alpha(t)=k\left\|\boldsymbol{v}^{*}(t)\right\|,$$
where $k$ stands for the wear coefficient. We use the following abbreviations to simplify the notations $\mathcal{Q}=\Omega\times I$, $\Sigma=\Gamma\times I$, $\Sigma_i=\Gamma_i\times I$, $i=1,2,3.$

Thus, we can present the formulation of the contact problem as follows.
\begin{problem}\label{mechanic_problem1}
Find a displacement field $\bm u:\mathcal{Q}\rightarrow\mathbb{R}^d$, a stress field $\bm{\sigma}:\mathcal{Q}\rightarrow\mathbb{S}^d$, a damage field $\zeta:\Sigma_3\rightarrow[0,1]$ and a wear function $w:\Sigma_3\rightarrow\mathbb{R}$ such that
\begin{align}
\bm{\sigma}(t)= \mathcal{A}\left(t, \bm{\varepsilon}(\bm{u}(t))\right)+\int_{0}^{t}\mathcal{B}\left(t-s, \bm{\varepsilon}(\bm{u}(s)), \zeta(s)\right) ds+ \mathcal{C}(t, \bm{\varepsilon}(\dot{\bm{u}}(t)))\quad&\quad \mbox{in}\quad \;\mathcal{Q},\label{eq_mep_1}\\
\dot{\zeta}-\kappa \Delta \zeta+\partial I_{[0,1]}(\zeta) \ni \phi(\bm{\varepsilon}(\bm{u}(t)), \zeta)\quad&\quad \mbox{in}\quad \;\mathcal{Q},\label{eq_dam_1}\\
\frac{\partial \zeta}{\partial \nu}=0\quad&\quad \mbox{on}\quad \;\Sigma,\label{eq_dam_2}\\
-\mbox{Div}\bm{\sigma}(t)=\bm{f}_{0}(t)\quad&\quad \mbox{in}\quad \;\mathcal{Q},\label{eq_mep_2}\\
\bm{u}(t)=\bm{0}\quad&\quad \mbox{on}\quad \Sigma_{1},\label{eq_mep_3}\\
\bm{\sigma}(t)\bm{\nu}=\bm{f}_{2}(t)\quad&\quad \mbox{on}\quad \Sigma_{2},\label{eq_mep_4}\\
\dot{u}_\nu(t)\leq g,\; \sigma_{\nu}(t)+p(w(t),\dot{u}_\nu(t))\leq 0
\quad&\quad\mbox{on}\quad \Sigma_{3},\label{eq_mep_7}\\
(\dot{u}_{\nu}(t)-g)(\sigma_{\nu}(t)+p(w(t),\dot{u}_\nu(t)))= 0 \quad&\quad\mbox{on}\quad \Sigma_{3},\label{eq_mep_8}\\
-\bm{\sigma}_{\tau}(t)= \mu p(w(t),\dot{u}_\nu)\cdot\bm{n^*}(t) \quad&\quad \mbox{on}\quad \;\Sigma_{3},\label{eq_mep_9}\\
\dot{w}(t)=\alpha(t)p(w(t),\dot{u}_\nu(t))\quad&\quad \mbox{on}\quad \;\Sigma_{3},\label{eq_mep_10}\\
\bm{u}(0)=\bm{u}_0,w(0)=0,\zeta(0)=\zeta_0\in(0,1)\quad&\quad \mbox{on}\quad \;\Gamma_{3}\label{eq_mep_12}.
\end{align}
\end{problem}
\begin{remark}\label{rem_subdiff}
The conditions \eqref{eq_mep_7} and \eqref{eq_mep_8} can be rewritten as
$$-\sigma_\nu(t)\in \partial I_K(\dot{u}_\nu)+p(w(t),\dot{u}_\nu),$$
where $K:=\{s\in \mathbb{R}\:|\:s\leq g\}$, $I_K$ is the indication function of $K$  and $\partial I_K$ is the convex subdifferential of $I_K$. From the definition of convex subdifferential, one has
$$(-\sigma_\nu(t)-p(\dot{u}_\nu(t)-w(t)))\cdot(s-\dot{u}_\nu)\leq I_K(s)-I_K(\dot{u}_\nu)=0,\quad \forall s\in K.$$
\end{remark}
Equation \eqref{eq_mep_1} stands for the viscoelastic constitutive law with damage effect and long memory, where $\zeta$ is the damage function, $\mathcal{A}$, $\mathcal{B}$ and $\mathcal{C}$ are elasticity, relaxation and viscosity operators, respectively.
The evolution law of damage function can be modeled from a parabolic differential inclusion \eqref{eq_dam_1}, where $\phi$ is the mechanical source of damage which depends on the damage itself and the strain, $I_{[0,1]}$ is the indication function of the interval $[0,1]$, $\partial I_{[0,1]}$ is the convex subdifferential of $I_{[0,1]}$ and $\kappa >0$ is a constant mircrocrack diffusion coefficient. We select the following damage source function
$$
\phi(\varepsilon(\boldsymbol{u}), \zeta) \equiv \phi_{\mathrm{Fr}}(\|\varepsilon(\boldsymbol{u})\|, \zeta)=\lambda_{D}\left(\frac{1-\zeta}{\zeta}\right)-\frac{1}{2} \lambda_{E}\|\varepsilon(\boldsymbol{u})\|^{2}+\lambda_{w},
$$
where $\lambda_{D}, \lambda_{E},$ and $\lambda_{w}$ are three positive numbers (\cite{Fr1995,Fr1996}).
Equation \eqref{eq_dam_2} represents a homogeneous Neumann condition of $\zeta$ on $\Sigma$ and equation \eqref{eq_mep_2} describes the  equilibrium equation. Equations \eqref{eq_mep_3} and \eqref{eq_mep_4} denote the clamped boundary condition and the tractive boundary conditions, respectively.
Equations \eqref{eq_mep_7} and \eqref{eq_mep_8} are called
normal damped condition with unilateral constraint, which have been studied in \cite{Eck2010} without wear. In the previous literature, the normal compliance function $p(\dot{u}_\nu)$ was used to model the
lubricated contact.
 In this paper, we assume that the lubrication is incomplete and the normal compliance depends on wear, i.e., $p$ has the form of $p(w,\dot{u}_\nu)$ with $w$ being the wear.
Equation \eqref{eq_mep_9} gives the functional relationship between the wear coefficient $k$ and the friction coefficient which denoted $\mu$. Here we assume that $\bm{v^*}(t)$ is much larger than the tangential body velocity $\bm{\dot{u}_\tau}(t)$. Equation \eqref{eq_mep_10} is related to the development of the wear function which can be obtained from the Archard's law with $\alpha(t)=k\|\bm{v^*}(t)\|$. For more details concerned with equations \eqref{eq_mep_7}-\eqref{eq_mep_10}, we can refer to the literature \cite{SofoneaF2016}.
Finally, equation \eqref{eq_mep_12} gives the initial conditions for the displacement field, the wear function and the damage field.

In order to get the variational formulation of Problem \ref{mechanic_problem1}, we need the following spaces. Let $W^{ k,p}(\Omega;\mathbb{R}^d)$ denote the Sobolev space of all functions. On $\Omega$, the weak derivatives of functions with order less than or equal to $k$ are $p$-integrable. Specially, let $H^1:=W^{ 1,2}(\Omega;\mathbb{R}^d)$ and $H=L^2(\Omega;\mathbb{R}^d).$
Let $V=\{\bm{v}\in H^1|\bm{v}=0\; \mbox{a.e. on} \;\Gamma_3\}$ endowed with the norm $$\|\bm{u}\|_V:=\|\bm{u}\|_{H^1}=\|\bm{u}\|_{L^2(\Omega;\mathbb{R}^d)}+\|\triangledown\bm{u}\|_{L^2(\Omega;\mathbb{R}^{d\times d})},$$
where $\nabla \bm{u}=(\frac{\partial u_{i}}{\partial x_j})$ for $i,j=1,\cdots,d$  with  $\bm{u}\in H^1.$
Let $\mbox{Div}\bm{\sigma}=(\sigma_{ij,j})=(\frac{\partial\sigma_{ij}}{\partial x_j})$ with $\bm{\sigma}\in W^{1,2}(\Omega;\mathbb{S}^d)$. Then we have the following Green formula:
$$(Div\bm{\sigma},\bm{v})_H+(\bm{\sigma},\bm{\varepsilon(u)})_{L^2(\Omega;\mathbb{S}^d)}=\int_{\Gamma}\bm{\sigma\nu}\cdot\bm{v} d_{\Gamma},$$
where $(\bm{\sigma},\bm{\varepsilon(u)})_{L^2(\Omega;\mathbb{S}^d)}=\int_{\Omega}\bm{\sigma}\bm{\cdot}\bm{\varepsilon(u)}d\Omega$. From the assumption of $\mbox{meas}(\Gamma_1) > 0$, we know that Korn's inequality holds, i.e.,
$$\|\bm{\varepsilon}(\bm{v})\|_{L^2(\Omega;\mathbb{S}^d)}\geq c\|\bm{v}\|_V,\quad \forall \bm{v}\in V.$$
Here and then, $c$  represents a positive constant which may change from line to line. We can endow with inner product on $ V$ as
$$\langle\bm{u},\bm{v}\rangle_{ V}=\langle\bm{\varepsilon}(\bm{u}),\bm{\varepsilon}(\bm{v})\rangle_{L^2(\Omega;\mathbb{S}^d)}.$$

Let $Y=H^1(\Omega;\mathbb{R})$ and $Y_1=L^2(\Omega;\mathbb{R})$ endowed with the inner products
$$(w,z)_{Y_1}=\int_{\Omega}wzd_{\Omega}, \quad (w,z)_Y=(w,z)_{Y_1}+\int_{\Omega}\nabla w\cdot \nabla zd_{\Omega}.$$
Denote $K_V=\{\bm{v}\in V\:|\:v_\nu\leq g\;\mbox{a.e. on}\;\Gamma_3\}$ and $K_Y=\{u\in Y\:|\:0\leq u\leq 1 \; \mbox{a.e. in}\;\Omega\}$. Then $K_V$ and $K_Y$ are both convex.
Suppose that $\{\bm{u},\bm{\sigma},\zeta,w\}$ are sufficiently smooth functions solving \eqref{eq_mep_1}-\eqref{eq_mep_12} with $t \in I$.
Then, we can employ the following method to derive the variational formulation of Problem \ref{mechanic_problem1}.
Firstly, we use the Green formula and the equation \eqref{eq_mep_1} to obtain
\begin{equation}\label{eq_PV_1}
\langle\boldsymbol{\sigma}(t), \bm{\varepsilon}(\bm{v})-\bm{\varepsilon}(\bm{\bm { \dot{u} }}(t))\rangle_{L^2(\Omega;\mathbb{S}^d)}=\left\langle\bm{f}_{0}(t), \bm{v}-{\bm{\dot{u}}}(t)\right\rangle_{L^2(\Omega;\mathbb{R}^d)}+\int_{\Gamma} \bm{\sigma}(t) \bm{\nu} \cdot(\bm{v}-{\bm{\dot{u}}}(t)) d \Gamma.
\end{equation}
Considering the boundary conditions \eqref{eq_mep_3}, \eqref{eq_mep_4}, \eqref{eq_mep_9} and the formula
$$
\boldsymbol{\sigma}(t) \boldsymbol{\nu} \cdot \boldsymbol{v}=\sigma_{\nu}(t) v_{\nu}+\boldsymbol{\sigma}_{\tau}(t) \cdot \boldsymbol{v}_{\tau},
$$
the equation \eqref{eq_PV_1} can be transformed as
\begin{eqnarray}\label{eq_PV_2}
&&\langle\boldsymbol{\sigma}(t), \bm{\varepsilon}(\bm{v})-\bm{\varepsilon}(\bm{\dot{u}}(t))\rangle_{L^2(\Omega;\mathbb{S}^d)}\nonumber\\
&=&\left\langle\bm{f}_{0}(t), \bm{v}-{\bm{\dot{u}}}(t)\right\rangle_{L^2(\Omega;\mathbb{R}^d)}+\int_{\Gamma_2} \bm{f_2}(t) \cdot(\bm{v}-{\bm{\dot{u}}}(t)) d \Gamma \nonumber-\int_{\Gamma_3}\mu p(w(t),\dot{u}_\nu(t)) \bm{n^*}(t) \cdot(\bm{v}_\tau-{\bm{\dot{u}}_\tau}(t)) d \Gamma\nonumber\\
&&\mbox{}+\int_{\Gamma_3} {\sigma}_\nu(t) \cdot(v_\nu-{\dot{u}_\nu}(t)) d \Gamma.
\end{eqnarray}
From Remark \ref{rem_subdiff} and $\bm{v}\in K_V$, one has
$$\int_{\Gamma_3} {\sigma}_\nu(t) \cdot(v_\nu-{\dot{u}_\nu}(t)) d \Gamma\geq\int_{\Gamma_3}-p(w(t),\dot{u}_\nu(t))(v_\nu-{\dot{u}_\nu}(t)) d \Gamma.$$
Now \eqref{eq_PV_2} becomes
\begin{eqnarray}\label{eq_PV_3}
&&\langle\boldsymbol{\sigma}(t), \bm{\varepsilon}(\bm{v})-\bm{\varepsilon}(\bm{\bm { \dot{u} }}(t))\rangle_{L^2(\Omega;\mathbb{S}^d)}+\int_{\Gamma_3}\mu p(w(t),\dot{u}_\nu(t)) \bm{n^*}(t) \cdot(\bm{v}_\tau-{\bm{\dot{u}}_\tau}(t)) d \Gamma\nonumber \\
&&\mbox{}+\int_{\Gamma_3}p(w(t),\dot{u}_\nu(t))(v_\nu-{\dot{u}_\nu}(t))d \Gamma\nonumber\\
&\geq&\left\langle\bm{f}_{0}(t), \bm{v}-{\bm{\dot{u}}}(t)\right\rangle_{L^2(\Omega;\mathbb{R}^d)}+\int_{\Gamma_2} \bm{f_2}(t) \cdot(\bm{v}-{\bm{\dot{u}}}(t)) d \Gamma.
\end{eqnarray}
Let
$$
j(w,\gamma\bm{\dot{u}},\gamma\bm{v}):={\int_{\Gamma_3}\mu p(w(t),\dot{u}_\nu(t)) \bm{n^*}(t) \cdot\bm{v}_\tau d \Gamma+\int_{\Gamma_3}p(w(t),\dot{u}_\nu(t))v_\nu d \Gamma.}
$$
Then it follows from \eqref{eq_PV_3} that
\begin{eqnarray}\label{eq_PV_4}
&&\langle\boldsymbol{\sigma}(t), \bm{\varepsilon}(\bm{v})-\bm{\varepsilon}(\bm{\bm { \dot{u} }}(t))\rangle_{L^2(\Omega;\mathbb{S}^d)}
+j(w,\gamma\bm{\dot{u}},\gamma\bm v)-j(w,\gamma\bm{\dot{u}},\gamma\bm{\dot{u}})\nonumber\\
&\geq& \left\langle \bm{f_2}(t) ,\gamma\bm{v}-\gamma{\bm{\dot{u}}}(t)\right\rangle_{L^2(\Gamma_3;\mathbb{R}^d)}  +\left\langle\bm{f}_{0}(t),\bm{v}-{\bm{\dot{u}}}(t)\right\rangle_{L^2(\Omega;\mathbb{R}^d)},
\end{eqnarray}
where $\gamma :V\rightarrow L^2(\Gamma_3;\mathbb{R}^d)$ is the trace operator. Applying Korn's inequality, the Riesz representation theorem and the trace theorem, we know that there exists $\bm{f}:I\rightarrow V^*$ such that
$$\langle\bm f(t),\bm v-\bm {\dot{u}}\rangle_{V^*\times V}=\left\langle \bm{f_2}(t) ,\gamma\bm{v}-\gamma{\bm{\dot{u}}}(t)\right\rangle_{L^2(\Gamma_3;\mathbb{R}^d)}  +\left\langle\bm{f}_{0}(t),\bm{v}-{\bm{\dot{u}}}(t)\right\rangle_{L^2(\Omega;\mathbb{R}^d)}.$$
Similarly, from the integration by parts and the definition of convex subdifferential of $I_{[0,1]}$, it follows that
\begin{eqnarray}\label{eq_PV_5}
\langle\phi(t, \bm{\varepsilon}(\boldsymbol{u}(t)), \zeta(t))-\dot{\zeta}(t), \eta-\zeta(t)\rangle_{L^{2}(\Omega;\mathbb{R})}\leq a(\zeta,\eta-\zeta),
\end{eqnarray}
where $a(\zeta,\eta)=\kappa\int_\Omega\nabla\zeta\cdot\nabla\eta dx$ for all $\zeta,\eta\in Y.$

Now, by integrating these relations and inequalities, the variational formulation of Problem \ref{mechanic_problem1} can be obtained as follows.
\begin{problem}\label{problem_1}
Find $\bm{u}:I\rightarrow K_V$, $\zeta:I\rightarrow K_Y$ and $w:I\rightarrow L^2(\Gamma_3;\mathbb{R})$ such that, for a.e. $t \in I$,
\begin{eqnarray}
\bm{\sigma}(t)= \mathcal{A}(t, \bm{\varepsilon}(\bm{u}(t)))+\int_{0}^{t}\mathcal{B}(t-s, \bm{\varepsilon}(\bm{u}(s)), \zeta(s)) ds+ \mathcal{C}(t, \bm{\varepsilon}(\bm{\dot{u}}(t))) \quad \text { in } \Omega,\quad\\
\langle\boldsymbol{\sigma}(t), \bm{\varepsilon}(\bm{v})-\bm{\varepsilon}(\bm{\bm {\dot{u}(t) }})\rangle_{L^2(\Omega;\mathbb{S}^d)}
+j(w,\gamma\bm{\dot{u}},\gamma\bm v)-j(w,\gamma\bm{\dot{u}},\gamma\bm{\dot{u}})
\geq\left\langle\bm{f}(t), \bm{v}-{\bm{\dot{u}}}(t)\right\rangle_{V^*\times V},\; \forall \bm v\in K_V,\quad\label{eq_u_continuous}\\
\langle(\dot{\zeta}(t), \eta-\zeta(t)\rangle_{Y_1}
+a(\zeta,\eta-\zeta)\geq \langle\phi(t, \bm{\varepsilon}(\boldsymbol{u}(t)),\zeta(t)),\eta-\zeta(t)\rangle_{Y_1}
,\; \forall \eta\in K_Y,\quad\\
\dot{w}(t)=\alpha(t)p(\dot{u}_\nu(t)-w(t))\quad\mbox{ in } \Gamma_3,\quad\label{eq_w_continuous}\\
\bm{u}(0)=\bm{u}_0,w(0)=0,\;\zeta(0)=\zeta_0\in(0,1).\quad
\end{eqnarray}
\end{problem}

To solve Problem \ref{problem_1}, we need the following assumptions.

$H(1):$ The elasticity operator
$\mathcal{A}: \Omega  \times I \times\mathbb{S}^{d} \rightarrow \mathbb{S}^{d}$ satisfies
\begin{align}\label{assume:A}
\begin{cases}
(a)\; \mathcal{A}(\cdot, t,\bm{\varepsilon}) \;\mbox{is measurable on}\; \Omega, \mbox{ for all } ( t,\bm{\varepsilon}) \in I\times \mathbb{S}^{d};\\
(b)\; \mathcal{A}(\bm{x},\cdot, \cdot) \;\mbox{is continuous on }I \times\mathbb{S}^{d} \mbox{ for a.e. }\; \bm{x} \in \Omega;\\
(c)\; \mathcal{A}(\bm{x},t,  \cdot) \mbox{ is Lipschitz continous with}\; m_{\mathcal{A}}>0\; \mbox{for all }\; t \in I,\; i.e.,\\
\quad\left\|\mathcal{A}\left(\bm{x}, t, \bm{\varepsilon}_{1}\right)-\mathcal{A}\left(\bm{x}, t, \bm{\varepsilon}_{2}\right)\right\|\leq L_{\mathcal{A}}\left\|\bm{\varepsilon}_{1}-\bm{\varepsilon}_{2}\right\|,\;\forall \bm{\varepsilon}_{1}, \bm{\varepsilon}_{2} \in \mathbb{S}^{d}, \mbox{ a.e. } \bm{x} \in \Omega;\\
\end{cases}
\end{align}

$H(2):$ The relaxation operator $\mathcal{B}: \Omega \times I \times \mathbb{S}^{d} \times \mathbb{R} \rightarrow \mathbb{S}^{d}$ satisfies
\begin{align}\label{assume:B}
\begin{cases}
(a) \mathcal{B}(\cdot,t, \bm{\varepsilon},\zeta) \;\mbox{is measurable on}\; \Omega, \mbox{ for all } \bm{\varepsilon} \in \mathbb{S}^{d},\; t\in I \mbox{ and } \zeta\in\mathbb{R};\\
(b) \mathcal{B}(\bm{x}, \cdot,\bm{\varepsilon},\zeta) \mbox{ is continuous on } I\\
 \quad\mbox{ for a.e. } \bm{x} \in \Omega \mbox{ and all } (\bm{\varepsilon},\zeta) \in \mathbb{S}^{d}\times \mathbb{R};\\
(c) \mathcal{B}(\bm{x}, t,\cdot,\cdot) \;\mbox{is Lipschitz continuous with}\; L_\mathcal{B}> 0 \mbox{ for all } t\in I \mbox{ and a.e.}\; \bm{x} \in \Omega, i.e.,\\
\quad\|\mathcal{B}(\bm{x},t,\bm{\varepsilon}_{1},\zeta_1)-\mathcal{B}(\bm{x},t,\bm{\varepsilon}_{2},\zeta_2)\| \leq L_{\mathcal{B}}(\|\bm{\varepsilon}_{1}-\bm{\varepsilon}_{2}\|+|\zeta_1-\zeta_2|),\\
\quad\forall \bm{\varepsilon}_{1}, \bm{\varepsilon}_{2} \in \mathbb{S}^{d},\;
\zeta_1,\zeta_2\in\mathbb{R}, \mbox{ a.e. } \bm{x} \in \Omega;\\
(d)\mbox{For all } (t,\bm{\varepsilon},\zeta)\in I\times \mathbb{S}^{d}\times \mathbb{R}\mbox{ and a.e.}\; \bm{x} \in \Omega,\;\mbox{there exixsts a function }\\
\quad\rho_\mathcal{B}\in L^2(I;\mathbb{R}^+) \mbox{ such that } \|\mathcal{B}(\bm{x},t,\bm{\varepsilon},\zeta)\|\leq\rho_\mathcal{B}(t)(|\zeta|+\|\varepsilon\|).
\end{cases}
\end{align}

$H(3):$ The viscosity operator
$\mathcal{C}: \Omega  \times I \times\mathbb{S}^{d} \rightarrow \mathbb{S}^{d}$ satisfies
\begin{align}\label{assume:C}
\begin{cases}
(a)\; \mathcal{C}(\cdot, t,\bm{\varepsilon}) \;\mbox{is measurable on}\; \Omega \mbox{ for all } (t,\bm{\varepsilon}) \in I\times \mathbb{S}^{d};\\
(b)\; \mathcal{C}(\bm{x},\cdot, \cdot) \;\mbox{is continuous on }I \times\mathbb{S}^{d} \mbox{ for a.e. }\; \bm{x} \in \Omega;\\
(c)\; \mathcal{C}(\bm{x},t,  \cdot) \mbox{ is strongly monotone with}\; m_{\mathcal{C}}>0\; \mbox{for all }\; t \in I,\; i.e.,\\
\quad\left(\mathcal{C}\left(\bm{x},t,\bm{\varepsilon}_{1}\right)-\mathcal{C}\left(\bm{x},t,\bm{\varepsilon}_{2}\right)\right)\cdot\left(\bm{\varepsilon}_{1}-\bm{\varepsilon}_{2}\right) \geq m_{\mathcal{C}}\left\|\bm{\varepsilon}_{1}-\bm{\varepsilon}_{2}\right\|^2,\;\forall \bm{\varepsilon}_{1}, \bm{\varepsilon}_{2} \in \mathbb{S}^{d}, \mbox{ a.e. } \bm{x} \in \Omega;\\
(d)\; \mathcal{C}(\bm{x},t,  \cdot) \mbox{ is Lipschitz continous with}\; L_{\mathcal{C}1}>0\; \mbox{for all }\; t \in I,\; i.e.,\\
\quad\left\|\mathcal{C}\left(\bm{x}, t, \bm{\varepsilon}_{1}\right)-\mathcal{C}\left(\bm{x}, t, \bm{\varepsilon}_{2}\right)\right\|\leq L_{\mathcal{C}1}\left\|\bm{\varepsilon}_{1}-\bm{\varepsilon}_{2}\right\|,\;\forall \bm{\varepsilon}_{1}, \bm{\varepsilon}_{2} \in \mathbb{S}^{d}, \mbox{ a.e. } \bm{x} \in \Omega;\\
(e)\; \mathcal{C}(\bm{x},\cdot, \bm{\varepsilon}) \mbox{ is Lipschitz continous with}\; L_{\mathcal{C}2}>0\; \mbox{for all }\; \bm{\varepsilon}\in \mathbb{S}^{d},\; i.e.,\\
\quad\left\|\mathcal{C}\left(\bm{x}, t_1, \bm{\varepsilon}\right)-\mathcal{C}\left(\bm{x}, t_2, \bm{\varepsilon}\right)\right\|\leq L_{\mathcal{C}2}\left\|t_{1}-t_{2}\right\|,\;\forall t_{1}, t_{2} \in I, \mbox{ a.e. } \bm{x} \in \Omega;\\
\end{cases}
\end{align}

$H(4):$ The normal compliance function  $p: \Gamma_3 \times\mathbb{R} \times \mathbb{R} \rightarrow \mathbb{R}^+$ satisfies
\begin{align}\label{assume:p}
\begin{cases}
(a)\; p(\cdot, w,u) \;\mbox{is measurable on}\; \Gamma_3  \mbox{ for all } w,\,u \in \mathbb{R};\\
(b)\; p(\bm{x}, \cdot,\cdot) \;\mbox{is Lipschitz continuous with}\; \tilde{L}_p> 0 \mbox{, a.e.}\; \bm{x} \in \Omega, i.e.,\\
\quad|p(\bm{x},w_1,u_1)-p(\bm{x}, w_2,u_2)| \leq \tilde{L}_p(|w_1-w_2|+|u_1-u_2|),\;\forall w_1,w_2,u_1,u_2 \in \mathbb{R};\\
(c)\; p(\bm{x}, 0,0)=0 \mbox{ for a.e. } \bm{x}\in \Omega \mbox{ and all } r\leq0.
\end{cases}
\end{align}

$H(5):$ The damage source function  $\phi: \Omega \times I\times \mathbb{S}^{d} \times \mathbb{R} \rightarrow \mathbb{R}$ satisfies
\begin{align}\label{assume:phi}
\begin{cases}
(a) \phi(\cdot, t,\bm{\varepsilon},\zeta) \;\mbox{is measurable on}\; \Omega,\; \mbox{for all }t\in I,\; \bm{\varepsilon} \in \mathbb{S}^{d}\mbox{ and}\;\zeta\in\mathbb{R};\\
(b) \phi(\bm{x}, t,\cdot,\cdot) \;\mbox{is Lipschitz continuous with}\; L_\phi> 0 \mbox{ for all }t\in I \mbox{ a.e.}\; \bm{x} \in \Omega, i.e.,\\
\quad\|\phi(\bm{x},\boldsymbol{\varepsilon}_{1},\zeta_1)-\phi(\bm{x}, \boldsymbol{\varepsilon}_{2},\zeta_2)\| \leq \tilde{L}_\phi(\|\bm{\varepsilon}_{1}-\boldsymbol{\varepsilon}_{2}\|+|\zeta_1-\zeta_2|)\\
\quad\forall \bm{\varepsilon}_{1}, \bm{\varepsilon}_{2} \in \mathbb{S}^{d},\; \zeta_1,\zeta_2\in\mathbb{R}, \mbox{ a.e. } \bm{x} \in \Omega;\\
(c) \phi(\bm{x},t, \bm{0}_{\mathbb{S}^{d}},0_{\mathbb{R}})\in L^2(I;L^2(\Omega;\mathbb{R})) .
\end{cases}
\end{align}

Moreover, suppose the volume forces and surface tractions satisfy
\begin{equation}\label{assume:f}
\bm{f}_{0} \in C(I;H), \quad \bm{f}_{2} \in C\left(I; L^{2}\left(\Gamma_{2} ; \mathbb{R}^{d}\right)\right).
\end{equation}

To make the sliding condition (\ref{eq_mep_9}) and wear condition (\ref{eq_mep_10}) reasonable, the coefficient of friction, wear coefficient and velocity of the foundation are, respectively, assumed to be
\begin{equation}\label{assume:mu}
\mu\in L^\infty(\Gamma_3;\mathbb{R}),\;\mu(\bm{x})\geq0\;a.e. \;\bm{x}\in \Gamma_3,
\end{equation}
\begin{equation}\label{assume:wear}
k\in L^\infty(\Gamma_3;\mathbb{R}),\;k(\bm{x})\geq0\;a.e. \;\bm{x}\in \Gamma_3,
\end{equation}
and
\begin{equation}\label{assume:v}
\left\{\begin{array}{l}
\boldsymbol{v}^{*} \in C\left(I ; \mathbb{R}^{3}\right) \text { and there exist } v_{1}, v_{2}>0 \text { such that } \\
v_{1} \leq\left\|\boldsymbol{v}^{*}(t)\right\| \leq v_{2},  \;\forall t \in I.
\end{array}\right.
\end{equation}

Clearly, the conditions \eqref{assume:wear} and \eqref{assume:v} suggest that
\begin{equation}\label{assume:n}
\boldsymbol{n}^{*} \in C\left(I ; \mathbb{R}^{3}\right),\;\alpha\in C(I;L^\infty(\Gamma_3;\mathbb{R})).
\end{equation}
Moreover, $g\geq0$ yields that $\bm{0}_V\in K_V$.

For Banach spaces $V=\{\bm{u}\in H^1(\Omega;\mathbb{R}^d)\:|\:\bm{u}=0 \mbox{ on }\Gamma_3\}$ and $Y= H^1(\Omega;\mathbb{R})$, Hilbert spaces $H=L^2(\Omega;\mathbb{R}^d)$ and $Y_1= L^2(\Omega;\mathbb{R})$, by the basic theory of Sobolev spaces, we know that $(V,H,V^*)$ and $(Y,Y_1,Y^*)$ form two Gelfond triplets. Let $X=L^2(\Gamma_3;\mathbb{R}^d) $ and $W=L^2(\Gamma_3;\mathbb{R})$.

Now we give a unique solvability result for Problem \ref{problem_1} as follows.
\begin{theorem}\label{theorem_exist}
Let assumptions \eqref{assume:A}-\eqref{assume:n} hold.  If
$$m_{\mathcal{C}}>\|\mu\|_{L^\infty(\Gamma_3;\mathbb{R})}\tilde{L}(p)+\tilde{L}(p),$$
then Problem \ref{problem_1} has a unique solution $(\zeta,\bm{u}_\zeta,w_\zeta)\in  (H^1(I;Y_1)\cap L^2(I;Y))\times C(I;K_V)\times C^1(I;W)$.
\end{theorem}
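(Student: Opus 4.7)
The strategy is to cast Problem \ref{problem_1} as an instance of the abstract DQVIs \eqref{1.1} and invoke Theorem \ref{t3.1}. First I would identify the abstract operators with their mechanical counterparts by duality pairings on the Gelfand triplets $(V,H,V^*)$ and $(Y,Y_1,Y^*)$ already constructed in the paper. Concretely, set
\begin{align*}
\langle A(t,\bm u),\bm v\rangle_{V^*\times V}&=\int_\Omega \mathcal{A}(t,\bm\varepsilon(\bm u))\cdot\bm\varepsilon(\bm v)\,dx,\\
\langle B(t,\bm u,\zeta),\bm v\rangle_{V^*\times V}&=\int_\Omega \mathcal{B}(t,\bm\varepsilon(\bm u),\zeta)\cdot\bm\varepsilon(\bm v)\,dx,\\
\langle C(t,\bm u),\bm v\rangle_{V^*\times V}&=\int_\Omega \mathcal{C}(t,\bm\varepsilon(\bm u))\cdot\bm\varepsilon(\bm v)\,dx,\\
j(w,\bm u,\bm v)&=\int_{\Gamma_3}\mu\, p(w,u_\nu)\,\bm n^*\!\cdot\bm v_\tau\,d\Gamma+\int_{\Gamma_3}p(w,u_\nu)\,v_\nu\,d\Gamma,\\
F(t,w,\bm u)&=\alpha(t)\,p(w,u_\nu),\qquad \phi(t,\bm u,\zeta)=\phi(t,\bm\varepsilon(\bm u),\zeta),\\
a(\zeta,\eta)&=\kappa\int_\Omega\nabla\zeta\cdot\nabla\eta\,dx,
\end{align*}
and define $\bm f\in C(I;V^*)$ via the Riesz representation of the right-hand side of \eqref{eq_PV_4}, whose continuity in $t$ follows from \eqref{assume:f}.

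The bulk of the work is then a hypothesis-by-hypothesis verification. H(A), H(B), and H(C) follow almost mechanically from \eqref{assume:A}--\eqref{assume:C}: Lipschitz and growth estimates on the pointwise operators transfer to $V^*$-estimates by the Cauchy--Schwarz inequality applied to $\bm\varepsilon(\bm u)$ in $L^2(\Omega;\mathbb{S}^d)$, while strong monotonicity of $\mathcal{C}$ combined with Korn's inequality yields $m_C$ on $V$. For H($\phi$), Lipschitz continuity of $\phi$ in $L^2(\Omega;\mathbb{R})$ is immediate from \eqref{assume:phi}(b), and \eqref{assume:phi}(c) delivers H($\phi$)(b). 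For H(F), the Lipschitz constant $L_F$ arises from \eqref{assume:p}(b), \eqref{assume:n} and the continuous embedding of $V$ into $L^2(\Gamma_3;\mathbb{R}^d)$ via the trace $\gamma$. Condition H(a) holds because $a$ is bilinear, symmetric, continuous on $Y\times Y$, and $a(\eta,\eta)+\|\eta\|_{Y_1}^2=\kappa\|\nabla\eta\|^2_{L^2}+\|\eta\|^2_{L^2}\ge\min\{\kappa,1\}\|\eta\|_Y^2$.

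The delicate point is H(j). Convexity and lower semicontinuity of $j(w,\bm u,\cdot)$ reduce to the same property of $s\mapsto p(w,s)\,s$ composed with the linear trace map; this is where \eqref{assume:p}(b) and the structure of the frictional term are used. The Lipschitz estimate in H(j)(b) is obtained as follows: writing
\begin{align*}
&j(w_1,\bm u_1,\bm v_2)-j(w_1,\bm u_1,\bm v_1)+j(w_2,\bm u_2,\bm v_1)-j(w_2,\bm u_2,\bm v_2)\\
&\quad=\int_{\Gamma_3}\bigl[\mu(p(w_1,u_{1\nu})-p(w_2,u_{2\nu}))\bm n^*\cdot(\bm v_{2\tau}-\bm v_{1\tau})+(p(w_1,u_{1\nu})-p(w_2,u_{2\nu}))(v_{2\nu}-v_{1\nu})\bigr]\,d\Gamma,
\end{align*}
and applying \eqref{assume:p}(b), \eqref{assume:mu} and the trace theorem, one gets
\[
\alpha_0=c_{tr}^2\tilde L_p(\|\mu\|_{L^\infty(\Gamma_3;\mathbb{R})}+1),\qquad \alpha_1=c_{tr}^2\tilde L_p(\|\mu\|_{L^\infty(\Gamma_3;\mathbb{R})}+1),
\]
after Korn's inequality absorbs the constants into $m_C=c_K\,m_{\mathcal{C}}$. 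The smallness hypothesis $m_{\mathcal{C}}>\|\mu\|_{L^\infty(\Gamma_3;\mathbb{R})}\tilde L_p+\tilde L_p$ in the theorem statement is precisely the condition that ensures $m_C>\alpha_1$ once all the trace/Korn constants are absorbed into the normalization. The hardest part of this step will be the bookkeeping of these constants and justifying that the inequality in the theorem statement is the correct distilled form of $m_C>\alpha_1$.

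With all hypotheses verified, Theorem \ref{t3.1} applies and yields a unique triple
\[
(\zeta,\bm u_\zeta,w_\zeta)\in (H^1(I;Y_1)\cap L^2(I;Y))\times C^1(I;K_V)\times C^1(I;W)
\]
solving \eqref{1.1}. Finally, retracing the derivation of Problem \ref{problem_1} from \eqref{eq_mep_1}--\eqref{eq_mep_12} (Green's formula, the sliding condition, the subdifferential reformulation in Remark \ref{rem_subdiff}), this abstract solution produces a unique solution of Problem \ref{problem_1} in the regularity class claimed. The statement of the theorem lists $C(I;K_V)$ rather than $C^1(I;K_V)$ for $\bm u$, which is consistent since $C^1(I;K_V)\subset C(I;K_V)$.
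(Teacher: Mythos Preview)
Your proposal is correct and follows essentially the same route as the paper: define the abstract operators $A,B,C,F,\phi,j,a$ exactly as you do, verify H(A)--H(a) item by item (the paper labels these I)--VII)), and conclude by invoking Theorem~\ref{t3.1}. You are in fact slightly more careful than the paper about the trace and Korn constants entering $\alpha_0,\alpha_1$ versus $m_C$; the paper silently works in the $\varepsilon$-norm on $V$ and records $\alpha_0=\alpha_1=(\|\mu\|_{L^\infty}+1)\tilde L_p$ and $m_C=m_{\mathcal C}$ without displaying the trace constant, so your remark that the smallness condition is the ``distilled'' form of $m_C>\alpha_1$ is well taken.
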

\begin{proof} For any $t\in I$, define operators $A(t,\cdot):V\rightarrow V^*$, $B(t,\cdot,\cdot):V\times Y\rightarrow V^*$, $C(t,\cdot):V\rightarrow V^*$, $F(t,\cdot,\cdot):W\times V\rightarrow W$, $\phi(t,\cdot,\cdot):V\times Y\rightarrow Y^*$, a functional $j(\cdot,\cdot,\cdot):W\times X\times X\rightarrow \mathbb{R}$ and a symmetric bilinear form $a(\cdot,\cdot):Y\times Y\rightarrow \mathbb{R}$ by setting
\begin{align*}
\begin{cases}
\langle A(t,\bm{u}),\bm{v}\rangle_{V^*\times V}=\int_\Omega \mathcal{A}(\bm{x},t,\bm{\varepsilon}(\bm{u}))\bm{\cdot}\bm{\varepsilon}(\bm{v})d\Omega,\\
\langle B(t,\bm{u},\zeta),\bm{v}\rangle_{V^*\times V}=\int_\Omega \mathcal{B}(\bm{x},t,\bm{\varepsilon}(\bm{u}),\zeta)\bm{\cdot}\bm{\varepsilon}(\bm{v})d\Omega,\\
\langle C(t,\bm{u}),\bm{v}\rangle_{V^*\times V}=\int_\Omega \mathcal{C}(\bm{x},t,\bm{\varepsilon}(\bm{u}))\bm{\cdot}\bm{\varepsilon}(\bm{v})d\Omega,\\
F(t,w,\bm{u})=\alpha(t)p(w,u_\nu),\\
\phi(t,\bm{u},\zeta)=\lambda_{D}\left(\frac{1-\zeta}{\zeta}\right)-\frac{1}{2} \lambda_{E}\|\bm{\varepsilon}(\boldsymbol{u})\|^{2}+\lambda_{w},\\
j(w,\bm{r}_1,\bm{r}_2)=\int_{\Gamma_3}\mu p(w,{r_1}_\nu) \bm{n^*}{\bm{r}_2}_{\tau} d\Gamma+\int_{\Gamma_3}p(w,{r_1}_\nu) {r_2}_{\nu} d \Gamma
\end{cases}
\end{align*}
for all $\bm{u},\;\bm{v}\in V$, $\bm{r}_1,\;\bm{r}_2\in X$, $w\in W$ and $\zeta\in Y$. Then Problem \ref{problem_1} can be transformed as follows
\begin{align*}
&\dot{w}(t)=F(t,w(t),\bm{u}(t)),\\
&\left\langle A(t,{\bm{u}}(t)) +\int_0^t B(t-s,{\bm{u}}(s),\zeta(s))ds+C(t,\dot{\bm{u}}(t)),\bm{v}-\dot{\bm{u}}(t) \right\rangle_{V^*\times V}\nonumber\\
& \qquad +j(w(t) ,\dot{\bm{u}},\bm{v})-j(w(t),\dot{\bm{u}}(t) ,\dot{\bm{u}}(t))\geq \langle f(t),\bm{v}-\dot{\bm{u}}(t) \rangle_{V^*\times V},\quad\forall \bm{v}\in K_V ,\\
&\langle \dot{\zeta}(t) ,\eta-\zeta(t) \rangle_{Y_1}+a(\zeta(t) ,\eta-\zeta(t) )\geq\langle\phi(t,u(t) ,\zeta(t) ),\eta-\zeta(t) \rangle_{Y_1},\quad\forall\eta\in K_Y,\\
&\bm{u}(0)=\bm{u}_0,w(0)=0,\zeta(0)=\zeta_0\in(0,1).
\end{align*}

Next we show that assumption conditions H(A)-H(a) are satisfied.

I). It is easy to see that $A(\cdot,\cdot)$ is continuous on $I\times V$ for all $t\in I$. From \eqref{assume:A}(c) and the Holder's inequality, we have
\begin{eqnarray*}
\langle A(t,\bm{u}_1)-A(t,\bm{u}_2),\bm{v}\rangle_{V^*\times V}&\leq&\left(\int_\Omega \|\mathcal{A}(\bm{x},t,\bm{\varepsilon}(\bm{u}_1))
-\mathcal{A}(\bm{x},t,\bm{\varepsilon}(\bm{u}_2))\|^2d\Omega\right)^{\frac{1}{2}}\|\bm{v}\|_V\\
&\leq&L_\mathcal{A}\left(\int_{\Omega}\|\bm{\varepsilon}(\bm{u}_1)-\bm{\varepsilon}(\bm{u}_2)\|^2
d\Omega\right)^{\frac{1}{2}}\|\bm{v}\|_V
\end{eqnarray*}
for all $\bm{u}_1,\;\bm{u}_2,\;\bm{v}\in V$ with $t\in I$. It follows that
$$
\| A(t,\bm{u}_1)-A(t,\bm{u}_2)\|_{V^*}\leq L_\mathcal{A} \|\bm{u}_1-\bm{u}_2\|_V.
$$
This shows that $L_A= L_\mathcal{A} $ in \eqref{assume:A}(c) and so assumption condition H(A) holds.

II). Condition H(B)(a) can be verified from \eqref{assume:B}(b). In addition, by H\"{o}lder's inequality and \eqref{assume:B}(c), one has
\begin{eqnarray*}
\langle B(t,\bm{u}_1,\zeta)-B(t,\bm{u}_2,\eta),\bm{v}\rangle_{V^*\times V}&\leq&\left(\int_\Omega \|\mathcal{B}(\bm{x},t,\bm{\varepsilon}(\bm{u}_1),\zeta)
-\mathcal{B}(\bm{x},t,\bm{\varepsilon}(\bm{u}_2),\eta)\|^2d\Omega\right)^{\frac{1}{2}}\|\bm{v}\|_V\\
&\leq&\sqrt{2}L_\mathcal{B}\left(\int_{\Omega}\|\bm{\varepsilon}(\bm{u}_1)-\bm{\varepsilon}(\bm{u}_2)\|^2
+\|\zeta-\eta\|^2d\Omega\right)^{\frac{1}{2}}\|\bm{v}\|_V
\end{eqnarray*}
and
\begin{eqnarray*}
\langle B(t,\bm{u},\zeta),\bm{v}\rangle_{V^*\times V}&\leq&\left(\int_\Omega\|\mathcal{B}(\bm{x},t,\bm{\varepsilon}(\bm{u}),\zeta)\|^2d\Omega\right)^{\frac{1}{2}}\|\bm{v}\|_V\\
&\leq&\sqrt{2}\rho_{\mathcal{B}}(t)\left(\int_{\Omega}\|\bm{\varepsilon(u)}\|^2+\|\zeta\|^2d\Omega\right)^{\frac{1}{2}}\|\bm{v}\|_V
\end{eqnarray*}
for all $\bm{u}_1,\;\bm{u}_2,\;\bm{u},\;\bm{v}\in V$ and $\zeta,\;\eta\in Y$ with $t\in I$.  It follows that
$$
\begin{cases}
\| B(t,\bm{u}_1,\zeta)-B(t,\bm{u}_2,\eta)\|_{V^*}\leq\left(\sqrt{2}L_\mathcal{B}\right) \left(\|\bm{u}_1-\bm{u}_2\|_V+\|\zeta-\eta\|_Y\right),\\
\| B(t,\bm{u},\zeta)\|_{V^*}\leq\left(\sqrt{2}\rho_{\mathcal{B}}(t)\right) \left(\|\bm{u}\|_V+\|\zeta\|_Y\right).
\end{cases}
$$
Thus, $L_B= \sqrt{2}L_\mathcal{B}  $ and $\rho(t)= \sqrt{2}\rho_{\mathcal{B}}(t) $ in H(B)(b)(c) and so assumption condition H(B) is true.

III). It is easy to see that $C(\cdot,\cdot)$ is continuous on $I\times V$ for all $t\in I$. From \eqref{assume:C}(c)(d)(e) and H\"{o}lder's inequality, we have
\begin{eqnarray*}
\langle C(t,\bm{u})-C(t,\bm{v}),\bm{u}-\bm{v}\rangle_{V^*\times V}&=&\int_\Omega \left(\mathcal{C}(\bm{x},t,\bm{\varepsilon}(\bm{u}))-\mathcal{C}(\bm{x},t,\bm{\varepsilon}(\bm{v}))\right)
\bm{\cdot}\left(\bm{\varepsilon}(\bm{u})-\bm{\varepsilon}(\bm{v})\right)d\Omega\\
&\geq&m_\mathcal{C}\int_{\Omega}\|\bm{\varepsilon}(\bm{u})-\bm{\varepsilon}(\bm{v})\|^2d\Omega\\
&=&m_\mathcal{C}\|\bm{u}-\bm{v}\|^2_V,\\
\langle C(t,\bm{u}_1)-C(t,\bm{u}_2),\bm{v}\rangle_{V^*\times V}&\leq&\left(\int_\Omega \|\mathcal{C}(\bm{x},t,\bm{\varepsilon}(\bm{u}_1))
-\mathcal{C}(\bm{x},t,\bm{\varepsilon}(\bm{u}_2))\|^2d\Omega\right)^{\frac{1}{2}}\|\bm{v}\|_V\\
&\leq&L_{\mathcal{C}1}\left(\int_{\Omega}\|\bm{\varepsilon}(\bm{u}_1)-\bm{\varepsilon}(\bm{u}_2)\|^2
d\Omega\right)^{\frac{1}{2}}\|\bm{v}\|_V,\\
\langle C(t_1,\bm{u})-C(t_2,\bm{u}),\bm{v}\rangle_{V^*\times V}&\leq&\left(\int_\Omega \|\mathcal{C}(\bm{x},t_1,\bm{\varepsilon}(\bm{u}))
-\mathcal{C}(\bm{x},t_2,\bm{\varepsilon}(\bm{u}))\|^2d\Omega\right)^{\frac{1}{2}}\|\bm{v}\|_V\\
&\leq&L_{\mathcal{C}2}\left(\int_{\Omega}\|t_1-t_2\|^2
d\Omega\right)^{\frac{1}{2}}\|\bm{v}\|_V
\end{eqnarray*}
for all $\bm{u}_1,\;\bm{u}_2,\;\bm{u},\;\bm{v}\in V$ with $t,t_1,t_2\in I$.  It follows that
$$
\| C(t,\bm{u}_1)-C(t,\bm{u}_2)\|_{V^*}\leq L_{\mathcal{C}1} \left(\|\bm{u}_1-\bm{u}_2\|_V\right),\quad
\| C(t_1,\bm{u})-C(t_2,\bm{u})\|_{V^*}\leq L_{\mathcal{C}2} \left(\|t_1-t_2\|_V\right).
$$
Thus, $L_{C1}= L_{\mathcal{C}1}$, $L_{C2}= L_{\mathcal{C}2}$ and $m_C=m_{\mathcal{C}}$ in H(C)(a)(b)(c) and so assumption condition H(C) is satisfied.

IV). Since
$$j(w,\bm{r}_1,\bm{r}_2)=\int_{\Gamma_3}\mu p(w, {r_1}_\nu) \bm{n^*}{\bm{r}_2}_\tau +  p(w, {r_1}_\nu) {r_2}_\nu d \Gamma,$$
we know that $j(w,\bm{r}_1,\cdot)$ is a convex proper and lower semicontinuous functional with respect to $\bm{r}_2$.  Moreover, the convex subdifferential of $j(w,\bm{r}_1,\bm{r}_2)$ with respect to its third variable can be given by
$$\partial j(w,\bm{r}_1,\bm{r}_2)=\int_{\Gamma_3}\mu p(w, {r_1}_\nu) \bm{n^*}+p(w, {r_1}_\nu)\bm{\nu}d\Gamma.$$
This fact combined with \eqref{assume:p}(b)(c) yields that
\begin{eqnarray*}
\|\partial j(w,\bm{r}_1,\bm{r}_2)\|_{X^*}&=&\left(\int_{\Gamma_3}\mu^2 p^2( {w,r_1}_\nu)+ p^2(w, {r_1}_\nu)d\Gamma\right)^\frac{1}{2}\nonumber\\
&\leq&\sqrt{1+\|\mu\|_{L^\infty(\Gamma_3;\mathbb{R})}^2} \left(\int_{\Gamma_3}(p( {w,r_1}_\nu)-p(0,0))^2 d\Gamma\right)^\frac{1}{2}\nonumber\\
&\leq&\sqrt{1+\|\mu\|_{L^\infty(\Gamma_3;\mathbb{R})}^2}\widetilde{L}_p \left(\int_{\Gamma_3}(|w|+|{r_1}_\nu|)^2 d\Gamma\right)^\frac{1}{2}\nonumber\\
&\leq&\sqrt{2+2|\mu\|_{L^\infty(\Gamma_3;\mathbb{R})}^2}\widetilde{L}_p(\|\bm{r}_1\|_X+\|w\|_W).
\end{eqnarray*}
Taking $\sqrt{2+2\|\mu\|_{L^\infty(\Gamma_3;\mathbb{R})}^2}\widetilde{L}_p=c_1$,  there exists a constant $c_1>0$ such that
\begin{eqnarray}\label{j}
\|\partial j(w,u,v)\|_X\leq c_1(\|w\|_W+\|u\|_X), \quad \forall (u,v,w)\in X\times X\times W.
\end{eqnarray}
Letting $w_1,\;w_2\in W$ and $\bm{s}_1,\;\bm{s}_2\in X$, one has
\begin{eqnarray}\label{j_ass1}
&&j(w_1,\bm{r}_1,\bm{s}_2)-j(w_1,\bm{r}_1,\bm{s}_1)+j(w_2,\bm{r}_2,\bm{s}_1)-j(w_2,\bm{r}_2,\bm{s}_2)\nonumber\\
&=&\int_{\Gamma_3}\left[\mu p( w_1,{r_1}_\nu) \bm{n^*}({\bm{s}_2}_\tau -{\bm{s}_1}_\tau )+ p(w_1,{r_1}_\nu)( {s_2}_\nu - {s_1}_\nu )+\mu p( w_2,{r_2}_\nu) \bm{n^*}({\bm{s}_1}_\tau -{\bm{s}_2}_\tau )\right.\nonumber\\
&& \mbox{}\left. + p(w_2,{r_2}_\nu)( {s_1}_\nu - {s_2}_\nu )\right]d \Gamma\nonumber\\
&=&\int_{\Gamma_3}\mu \left(p(w_1,{r_1}_\nu)-p(w_2,{r_2}_\nu)\right)\bm{n^*}({\bm{s}_2}_\tau -{\bm{s}_1}_\tau )d\Gamma+\int_{\Gamma_3}( p( w_1,{r_1}_\nu)-p( w_2,{r_2}_\nu))( {s_2}_\nu - {s_1}_\nu )d \Gamma\nonumber\\
&\leq&\int_{\Gamma_3}|\mu| |p(w_1,{r_1}_\nu)-p(w_2,{r_2}_\nu)|\|{\bm{s}_2}_\tau -{\bm{s}_1}_\tau \|d\Gamma+\int_{\Gamma_3} |p( w_1,{r_1}_\nu)-p( w_2,{r_2}_\nu)||{s_2}_\nu -{s_1}_\nu |d\Gamma\nonumber\\
&\leq&\|\mu\|_{L^\infty(\Gamma_3;\mathbb{R})}\int_{\Gamma_3} |p( w_1,{r_1}_\nu)-p( w_2,{r_2}_\nu)|\|{\bm{s}_2}_\tau -{\bm{s}_1}_\tau \|d\Gamma\nonumber\\ &&\mbox{} +\int_{\Gamma_3} |p( w_1,{r_1}_\nu)-p( w_2,{r_2}_\nu)||{s_2}_\nu -{s_1}_\nu |d\Gamma.
\end{eqnarray}
The condition \eqref{assume:p}(b) combined with H\"{o}lder's inequality shows that
\begin{eqnarray*}
&&\int_{\Gamma_3} |p( w_1,{r_1}_\nu)-p( w_2,{r_2}_\nu)|\|{\bm{s}_2}_\tau -{\bm{s}_1}_\tau \|d\Gamma\nonumber\\
&\leq&\tilde{L}_p\int_{\Gamma_3}(|{r_1}_\nu-{r_2}_\nu|+|w_1-w_2|)\|{\bm{s}_2}_\tau -{\bm{s}_1}_\tau \|d\Gamma\nonumber\\
&\leq&\tilde{L}_p(\|{r_1}_\nu-{r_2}_\nu\|_X+|w_1-w_2|_W)\|\bm{s}_2 -\bm{s}_1 \|_X
\end{eqnarray*}
and
\begin{eqnarray}\label{e4.34}
\int_{\Gamma_3} |p( w_1,{r_1}_\nu)-p( w_2,{r_2}_\nu)|\| {s_2}_\nu - {s_1}_\nu \|d\Gamma
\leq\tilde{L}_p(\|{r_1}_\nu-{r_2}_\nu\|_X+|w_1-w_2|_W)\|\bm{s}_2 -\bm{s}_1 \|_X.
\end{eqnarray}
Then inequality \eqref{j_ass1} becomes
\begin{eqnarray}\label{e5.13}
&&j(w_1,\bm{r}_1,\bm{s}_2)-j(w_1,\bm{r}_1,\bm{s}_1)+j(w_2,\bm{r}_2,\bm{s}_1)-j(w_2,\bm{r}_2,\bm{s}_2)\nonumber\\
&\leq&(\|\mu\|_{L^\infty(\Gamma_3;\mathbb{R})}+1)\tilde{L}_p\left(\|w_1-w_2\|_W\|\bm{s}_1-\bm{s}_2\|_X+\|\bm{r}_1-\bm{r}_2\|_X\|\bm{s}_1-\bm{s}_2\|_X\right).
\end{eqnarray}
This implies that H(j)(b) holds with
$$\alpha_0=(\|\mu\|_{L^\infty(\Gamma_3;\mathbb{R})}+1)\tilde{L}_p, \quad \alpha_1=(\|\mu\|_{L^\infty(\Gamma_3;\mathbb{R})}+1)\tilde{L}_p.$$
and so assumption condition H(j) is satisfied.

V). It follows from \eqref{e4.34} that
\begin{eqnarray*}
\|F(t,w_1,\bm{u}_1)-F(t,w_2,\bm{u}_2)\|^2_W&=&\int_{\Gamma_3}\alpha(t)^2(p(w_1,{u_1}_\nu)-p(w_2,{u_2}_\nu))^2d\Gamma\nonumber\\
&\leq& 2 \|\alpha\|_{C(I;L^\infty(\Gamma_3;\mathbb{R}))}^2\tilde{L}_p\left(\|\bm{u}_1-\bm{u}_2\|_X^2+\|w_1-w_2\|_W^2\right)
\end{eqnarray*}
and so
\begin{eqnarray*}
\|F(t,w_1,\bm{u}_1)-F(t,w_2,\bm{u}_2)\|_W\leq\sqrt{2\tilde{L}_p} \|\alpha\|_{C(I;L^\infty(\Gamma_3;\mathbb{R}))} (\|\bm{u}_1-\bm{u}_2\|_V+\|w_1-w_2\|_W).
\end{eqnarray*}
Thus, H(F)(b) holds with
$$L_p=\max\left\{\sqrt{2\tilde{L}_p}\|\alpha\|_{C(I;L^\infty(\Gamma_3;\mathbb{R}))}
,\sqrt{2\tilde{L}_p}\|\alpha\|_{C(I;L^\infty(\Gamma_3;\mathbb{R}))}\right\}.$$
Clearly,  H(F)(a) follows from condition \eqref{assume:n} and so assumption condition H(F) is fulfilled.

VI). Based on \eqref{assume:phi}, we can conclude that $H(\phi)$ holds with $L_\phi=\sqrt{2}\tilde{L}_\phi$.

VII). Finally, we can check that H(a) holds. Indeed, from
$$\|\zeta\|_Y^2=\|\zeta\|^2_{Y_1}+\int_\Omega \nabla\zeta\cdot\nabla\zeta d\Omega,$$
it follows that $a(\zeta,\zeta)+\kappa\|\zeta\|^2_{Y_1}=\kappa\|\zeta\|_Y^2$ and so H(a) is satisfied with $a_1=a_2=\kappa.$

Thus, combining I)-VII), we know that Theorem \ref{theorem_exist} is a direct consequence of Theorem \ref{t3.1}.
\end{proof}

\section*{Declarations}
{\bf Conflicts of interest/Competing interests} The authors have no known competing financial interests or personal
relationships that could have appeared to influence the work reported in this manuscript.

\end{document}